\newtheorem{thm}{Theorem}[section]
\newtheorem{question}[thm]{Question}
\newtheorem{theorem}{Theorem}[section]
\newtheorem{lemma}[theorem]{Lemma}
\newtheorem{proposition}[theorem]{Proposition}
\newtheorem{corollary}[theorem]{Corollary}
\theoremstyle{remark}
\theoremstyle{definition}
\newcommand{\ev}{\mathrm{ev}}
\newcommand{\mc}{\mathcal}
\newcommand{\nc}{\newcommand}
\nc{\dmo}{\DeclareMathOperator}
\nc{\R}{\mathbb{R}}
\nc{\Z}{\mathbb{Z}}
\nc{\N}{\mathbb{N}}
\nc{\cS}{\mathcal{S}}
\nc{\iso}{\cong}
\dmo{\Diff}{Diff}
\dmo{\Homeo}{Homeo}
\dmo{\dist}{dist}
\dmo\BDiff{BDiff}
\dmo\SO{SO}
\dmo\slide{sl}
\dmo\im{im}
\dmo\id{id}
\dmo\Fix{Fix}
\dmo\Out{Out}
\dmo{\T}{\mathcal{T}}
\dmo{\Te}{\mathcal{T}^{\epsilon}}
\dmo{\Me}{\mathcal{M}^{\epsilon}}
\begin{document}

\title{Equidistribution of hyperbolic groups in homogeneous spaces}

\author[I. Gekhtman]{Ilya Gekhtman}
\address{Department of Mathematics\\ 
Technion -- Israel Institute of Technology\\ 
Technion City \\
Haifa, Israel, 320003\\}
\email{\href{mailto:ilyagekh@gmail.com}{ilyagekh@gmail.com}}

\author[S.J. Taylor]{Samuel J. Taylor}
\address{Department of Mathematics\\ 
Temple University\\ 
1805 North Broad Street\ 
Philadelphia, PA 19122, U.S.A\\}
\email{\href{mailto:samuel.taylor@temple.edu}{samuel.taylor@temple.edu}}

\author[G. Tiozzo]{Giulio Tiozzo}
\address{Department of Mathematics\\ 
University of Toronto\\ 
40 St George St\\ 
Toronto, ON, Canada\\}
\email{\href{mailto:tiozzo@math.toronto.edu}{tiozzo@math.toronto.edu}}

\date{\today}

\begin{abstract}
 We prove that infinite orbits of Zariski dense hyperbolic groups equidistribute in homogeneous spaces, in the sense that the family of measures obtained by averaging along spheres in the Cayley graph converges to Haar measure.
\end{abstract}

\maketitle

\section{Introduction}

Generalizing the classical ergodic theorem of Birkhoff, ergodic theorems for actions of hyperbolic groups have long been a subject of interest, 
starting with Nevo--Stein \cite{NS} for free groups, and then Fujiwara--Nevo \cite{FN}, 
Bufetov \cite{Buf1, Buf2},  Bowen \cite{Bowen},  Bufetov--Series \cite{BS}, and Bowen--Nevo \cite{BN}, among others. 

Given a hyperbolic group $\Gamma$ with a measure preserving action on a probability space $(X, m)$, these authors consider Ces\`aro averages 
of the following type: let $S$ be a finite generating set for $\Gamma$, and let $S_n$ denote the sphere of radius $n$ in the Cayley graph of $(\Gamma, S)$. 
Then for any function $f \colon X \to \mathbb{R}$, any $x \in X$ and $N \geq 1$, one defines the averaging operator 
\begin{equation} \label{E:average}
c_N(f) := \frac{1}{N} \sum_{n \leq N} \frac{1}{\# S_n} \sum_{|w| = n} f(w^{-1} x).
\end{equation}
The most recent results in this vein are due to Bufetov--Khristoforov--Klimenko \cite{BKK} and Pollicott--Sharp \cite{PS}, 
who, for measure-preserving actions of hyperbolic groups, establish convergence of the Ces\`aro averages $(c_N(f))_{N \geq 0}$ for $f \in L^\infty(X, m)$ and 
for \emph{almost every} point $x \in X$. 
In these cases, the identification of the limit 
is a well-known open problem (see e.g. \cite{BKK}). In this paper, we prove the convergence of such Ces\`aro averages for \emph{every} 
starting point $x \in X$, provided that $X$ is a homogeneous space, and we identify the limiting measure.

\smallskip
To recall the setting of homogenous dynamics, consider a real Lie group $G$, a lattice $\Lambda<G$ and a subgroup $\Gamma<G$. The subgroup $\Gamma$ acts on $G/\Lambda$ by left multiplication. The distribution of orbits of $\Gamma$ in the homogeneous space $G/\Lambda$ has been the object of much research. When $G=\mathrm{SL}_{2}(\mathbb{R})$ and $\Gamma$ is the diagonal subgroup, the orbits of $\Gamma$ are precisely hyperbolic geodesics in the unit tangent bundle of $\mathbb{H}^2/\Lambda$. By ergodicity of the geodesic flow, almost every $x\in G/\Lambda$ (with respect to Riemannian
measure $m$ on $\mathbb{H}^2/\Lambda$) has dense orbit and is equidistributed with respect to Haar measure, i.e. for any continuous $f\colon G/\Lambda\to \mathbb{R}$ we have $\frac{1}{T} \int^T_{0}f(g_tx) \ dt \to \int f dm$.
Nevertheless some orbits of $\Gamma$ are closed geodesics, and others are very wild: indeed, for any $c\in [1,2]$ there is an orbit whose image in $\mathbb{H}^2/\Lambda$ has closure of Hausdorff dimension $c$. When $\Gamma<\mathrm{SL}_{2}(\mathbb{R})$ is instead the (one parameter continuous) upper triangular subgroup, the orbits of $\Gamma$ are horocycles in $\mathbb{H}^2/\Lambda$ and their orbits are much more regular: indeed, Ratner's celebrated theorem implies any $\Gamma$-orbit is either closed or dense in $G/\Lambda$ \cite{Ratner}. In the latter case, their orbits are equidistributed with respect to Haar measure. 

\smallskip
Here we are concerned with finitely generated subgroups $\Gamma<G$, in particular ones which are word hyperbolic.  We will show that under suitable assumptions, Ces\`aro averages of spheres in their Cayley graphs  become equidistributed with respect to the Haar measure on $G/\Lambda$. 
Assume that $G$ is connected semisimple and $\Gamma$ is Ad-Zariski dense, meaning that the image of $\Gamma$ by the adjoint representation 
$\mathrm{Ad} \colon G \to \mathrm{GL}(\mathfrak{g})$ is Zariski dense.
A breakthrough of Benoist and Quint \cite{BQ1, BQ2, BQ3} implies that every $\Gamma$ orbit in $G/\Lambda$ is either finite or dense.

If $x\in G/\Lambda$ is such that $\Gamma x$ is infinite, we prove that its orbit equidistributes with respect to the Haar measure $\nu$ on $X = G/\Lambda$. 
More precisely:  

\begin{theorem} \label{th:main}
Let $G$ be a connected semisimple real Lie group without compact factors, and $\Lambda < G$ an irreducible lattice.  
Let $X := G/\Lambda$ and $\nu$ be the Haar measure on $X$.
Let $\Gamma$ be a hyperbolic group, and consider a representation $\rho \colon \Gamma \to G$ with Ad-Zariski dense image, 
which defines an action of $\Gamma$ on $X$. 
Fix any finite generating set $S$ of $\Gamma$, and let $S_n$ be the sphere of radius $n$ in the Cayley graph of $(\Gamma, S)$. 
Then for any continuous $f \colon X \to \mathbb{R}$ with compact support, and any $x \in X$ we have that either the orbit $\Gamma x$ is finite, or
\begin{equation} \label{E:main-1}
\frac{1}{N} \sum_{n \leq N} \frac{1}{\# S_n} \sum_{|w| = n} f(w^{-1} x)\to \int_X f \ d \nu.
\end{equation}
\end{theorem}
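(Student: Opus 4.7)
The plan is to prove that every weak-star subsequential limit $\mu_\infty$ of the probability measures
\[
\mu_N := \frac{1}{N}\sum_{n \leq N} \frac{1}{\#S_n}\sum_{|w|=n} \delta_{w^{-1}x}
\]
on $X$ equals the Haar measure $\nu$ whenever $\Gamma x$ is infinite. My strategy combines a Markov description of spherical averages on the hyperbolic group $\Gamma$ with the Benoist--Quint classification of stationary measures on $X$.

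First, I would invoke the Cannon automatic structure on $(\Gamma,S)$ to parametrize geodesic words of length $n$ by length-$n$ paths in a finite directed graph $\mathcal{G}$ edge-labelled by $S$. Up to a uniformly small error coming from non-dominant strongly connected components, the uniform measure on $S_n$ is described by a topological Markov chain on $\mathcal{G}$, and its Cesàro averages converge to a Parry-type shift-invariant measure. Using the coding together with the exponential mixing of the associated subshift of finite type, I would construct a finitely supported probability measure $\mu$ on $\Gamma$ whose support generates $\Gamma$ --- so that $\rho_*\mu$ is Ad-Zariski dense in $G$ --- such that
\[
\Bigl| c_N(f)(x) - \frac{1}{N}\sum_{n=1}^{N} \int f\bigl(\rho(w)^{-1} x\bigr)\, d\mu^{*n}(w) \Bigr| \longrightarrow 0
\]
for every continuous compactly supported $f \colon X \to \mathbb{R}$ and every $x \in X$.

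Once this comparison is established, every weak-star subsequential limit of $\mu_N$ is also a weak-star limit of the Cesàro averages of the random walk driven by $\rho_*\mu$ started at $x$. The Benoist--Quint theorems \cite{BQ1,BQ2,BQ3} then apply directly: their non-escape-of-mass estimate ensures that these averages are tight, while their classification of stationary measures identifies any limit as a convex combination of $\nu$ and probability measures supported on finite $\rho(\Gamma)$-orbits. Since only finitely many finite orbits can meet any given compact subset of $X$, and since $\Gamma x$ is infinite by hypothesis, an approximation argument rules out accumulation on finite orbits, yielding $\mu_\infty = \nu$.

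The principal obstacle is the comparison between spherical and random-walk Cesàro averages. Spherical measures satisfy no convolution identity, so the reduction is nontrivial and relies on the Cannon automaton together with thermodynamic formalism: spherical averages behave, asymptotically, like iterates of a transfer operator with a spectral gap, whose leading eigenmeasure one must match, up to exponentially small error, with the iterates of a carefully chosen convolution on $\Gamma$. A secondary subtlety is to preserve Ad-Zariski density of the effective random-walk measure after restricting to the dominant components of the automaton, which is achievable because those components already generate $\Gamma$ up to a bounded word-length correction.
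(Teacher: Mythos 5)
Your overall architecture---encode spheres via the Cannon automaton, feed the result into Benoist--Quint, identify the limit---is the right one, and your recognition that the central obstacle is to relate spherical Ces\`aro averages to something to which Benoist--Quint applies is also correct. But the specific reduction you propose has a genuine gap, and it is exactly at the place you flag as the ``principal obstacle.''

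You assert the existence of a \emph{single} finitely supported, $\Gamma$-generating measure $\mu$ such that
\[
\Bigl| c_N(f)(x) - \tfrac{1}{N}\textstyle\sum_{n\leq N}\int f(\rho(w)^{-1}x)\,d\mu^{*n}(w) \Bigr| \to 0
\]
for every $x$, and you offer ``thermodynamic formalism'' and ``spectral gap'' as the reason. This is not a proof, and more importantly it is not true in the form stated. The Cannon coding makes the spherical measure into a \emph{Markov-chain} distribution, not a Bernoulli one: the law of the $(n+1)$st letter depends on the automaton state after $n$ letters, so the uniform measure on $S_n$ is not $\mu^{*n}$ (nor is it close in total variation to $\mu^{*n}$) for any fixed $\mu$ on $\Gamma$. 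Even in the free group the uniform measure on $S_n$ is the law of a non-backtracking walk, which is Markov but not a convolution power. Exponential mixing of the subshift tells you the Markov chain equilibrates among automaton states, not that it is asymptotically i.i.d.\ as a $\Gamma$-valued process. This is precisely why the problem is hard, and why the identification of the limit had been open even though Ces\`aro convergence for a.e.\ $x$ was known (Pollicott--Sharp, Bufetov--Khristoforov--Klimenko).

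What the paper does instead sidesteps this entirely and is the key idea you are missing. Rather than trying to match the Markov chain against a single convolution, one fixes a recurrent vertex $v_j$ of the automaton and looks at successive returns to $v_j$. The increments between returns are i.i.d.\ elements of the \emph{loop semigroup} $\Gamma_j$ (the set of $\Gamma$-elements read along loops at $v_j$), distributed according to a measure $\mu_j$; this \emph{is} a genuine random walk, but on $\Gamma_j$, not on $\Gamma$, and $\mu_j$ has infinite support (so one needs the finite-first-moment extension of Benoist--Quint due to B\'enard--de Saxc\'e, not the compactly supported version). Thickness of the combing guarantees that $\Gamma_j$ generates a finite-index subgroup of $\Gamma$, which is what lets one transfer Ad-Zariski density and the orbit closure to $\Gamma_j$ (Lemma~\ref{lem:usethis}). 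The spherical average is then recovered from the return-time random walk via the a.s.\ return-time density $T_j(N)/N \to \pi_j$, applied to the \emph{pointwise} Ces\`aro statement of Benoist--Quint (item (3) of Theorem~\ref{th:BQ}, a.s.\ convergence of empirical measures along trajectories), rather than the stationary-measure classification you invoke. Your approach also adds an unnecessary tightness/finite-orbit elimination step; once one has the pointwise a.s.\ convergence to $\nu_{\overline{\Gamma x}}$ for the loop-semigroup walk, no separate tightness or classification argument is needed.

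So: right high-level strategy, but the central reduction as written does not hold, and the correct replacement (returns to a fixed automaton state give an i.i.d.\ walk on the loop semigroup, passed into Benoist--Quint via thickness and return-time densities) is a different idea, not a routine patch.
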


We remark that since spheres in the Cayley graph are symmetric, one can instead consider $f(wx)$ in the theorem statement rather than $f(w^{-1}x)$, however this would be less natural for the proof.

\smallskip

To consider more general triples $(G,\Lambda, \Gamma)$, 
the hypothesis that $\Gamma$ is Ad-Zariski dense can be replaced with the hypothesis that $\mathrm{Ad}(\Gamma) \subset \mathrm{GL}(\mathfrak{g})$ is 
Zariski connected semisimple with no compact factor and that $\Gamma x$ is dense in $X$.
In fact, we can assume more weakly that $\overline{\Gamma x}$ is connected, in which case $\nu$ is replaced by the unique invariant (Haar) probability measure $\nu_{\overline{\Gamma x}}$ on the \emph{homogeneous} space $\overline{\Gamma x}$, which exists by Benoist--Quint \cite{BQ3}. See Section \ref{sec:passing} for details.

\begin{theorem} \label{th:main-general}
Let $G$ be a real Lie group, $\Lambda < G$ a lattice, and let $\rho \colon \Gamma \to G$ be a representation of a hyperbolic group $\Gamma$ into $G$.
Suppose that the Zariski closure of $\mathrm{Ad}( \rho(\Gamma))$ is Zariski connected, semisimple, and without compact factors. 
Fix any finite generating set $S$ of $\Gamma$, and let $x \in X$ such that the orbit closure $\overline{\Gamma x}$ is connected.
Then for any continuous $f \colon X \to \mathbb{R}$ with compact support, we have
\begin{equation} \label{E:main-gen}
\frac{1}{N} \sum_{n \leq N} \frac{1}{\# S_n} \sum_{|w| = n} f(w^{-1} x)\to \int_X f \ d \nu_{\overline{\Gamma x}}.
\end{equation}
\end{theorem}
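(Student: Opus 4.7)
The strategy is to reduce Theorem \ref{th:main-general} to Theorem \ref{th:main} by restricting the action to the orbit closure $Y := \overline{\Gamma x}$. The key input is the Benoist--Quint measure rigidity theorem. Since by hypothesis the Zariski closure of $\mathrm{Ad}(\rho(\Gamma))$ is Zariski connected, semisimple, and without compact factors, and since $Y$ is assumed connected, \cite{BQ3} guarantees that $Y$ is a homogeneous subspace of $X$: writing $x = g\Lambda$, there is a closed connected subgroup $H \leq G$ containing $\rho(\Gamma)$ with $Y = Hx$ and $Y \cong H/\Lambda_H$ for the lattice $\Lambda_H := H \cap g\Lambda g^{-1}$. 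Moreover, the measure $\nu_{\overline{\Gamma x}}$ coincides with the $H$-invariant probability measure on $Y$.

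Next I would verify that the restricted system $(H, \Lambda_H, \rho \colon \Gamma \to H)$ satisfies the hypotheses of Theorem \ref{th:main}. The group $H$ is connected semisimple without compact factors because its adjoint image is, up to isogeny, the Zariski closure of $\mathrm{Ad}(\rho(\Gamma))$; any central kernel may be absorbed into the lattice without affecting the action on $H/\Lambda_H$. By construction, $\rho(\Gamma)$ is Ad-Zariski dense in $H$. Given a continuous compactly supported $f \colon X \to \mathbb{R}$, the restriction $f|_Y$ is continuous and compactly supported on the closed subset $Y$. Applying Theorem \ref{th:main} to $f|_Y$ regarded as a function on $H/\Lambda_H$, with the action of $\Gamma$ via $\rho$, yields
\[
\frac{1}{N} \sum_{n \leq N} \frac{1}{\# S_n} \sum_{|w| = n} f(w^{-1} x) \;\longrightarrow\; \int_Y f|_Y \, d\nu_{H/\Lambda_H} \;=\; \int_X f \, d\nu_{\overline{\Gamma x}},
\]
which is precisely \eqref{E:main-gen}. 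The finite-orbit alternative from Theorem \ref{th:main} does not appear because the hypothesis that $Y$ is a positive-dimensional connected homogeneous space rules it out.

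The main obstacle I anticipate is that Theorem \ref{th:main} is stated for an \emph{irreducible} lattice, while $\Lambda_H$ need not be irreducible in $H$. To handle this, if $H$ decomposes as an almost direct product $H_1 \cdots H_r$ of simple factors and $\Lambda_H$ is reducible, then up to passing to a finite cover one can write $H/\Lambda_H$ as a product of homogeneous spaces $H^{(i)}/\Lambda^{(i)}$ with each $\Lambda^{(i)}$ irreducible. Applying Theorem \ref{th:main} on each factor gives marginal equidistribution, and the joint equidistribution on the product is then recovered from the fact that $\nu_Y$ is the unique $\rho(\Gamma)$-invariant Borel probability measure on $Y$ (again via \cite{BQ3}). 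Alternatively, and more efficiently, one inspects the proof of Theorem \ref{th:main} and observes that the irreducibility hypothesis on the lattice is not essential to the argument; with that observation the theorem applies verbatim to $(H, \Lambda_H, \rho)$, and no factor-by-factor reduction is needed. The only other subtlety is checking $\rho(\Gamma) \subseteq H$ rather than merely $\rho(\Gamma) \subseteq H \cdot \mathrm{Stab}(x)$, which follows from the $\rho(\Gamma)$-invariance of $Y = Hx$ together with connectedness of $H$.
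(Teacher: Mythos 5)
Your proposal inverts the logical dependency of the paper, and in doing so runs into a substantive gap. In the paper, Theorem \ref{th:main-general} is \emph{not} derived from Theorem \ref{th:main}; rather, the general Theorem \ref{th:main-3} is proved directly from the Benoist--Quint machinery together with the structure theory of thick geodesic combings (Sections \ref{sec:passing}--\ref{sec:general}), Theorem \ref{th:main-general} follows immediately because hyperbolic groups admit thick geodesic combings for any finite generating set (Cannon), and Theorem \ref{th:main} is then a special case of Theorem \ref{th:main-general} (using \cite[Corollary 1.8]{BQ3}). So reducing Theorem \ref{th:main-general} to Theorem \ref{th:main} is backwards relative to the source; at best it would have to be treated as a reduction to an independently established black box.

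More importantly, the reduction itself fails. You assert that, writing $Y=\overline{\Gamma x}$ as a homogeneous space $Hx$ with $H=G_Y$, the group $H$ is semisimple without compact factors and $\rho(\Gamma)$ is Ad-Zariski dense in $H$ ``by construction.'' Neither claim is justified, and both are false in general: Benoist--Quint only guarantee that $G_Y$ is a closed subgroup preserving a probability measure on $Y$, and this stabilizer can be strictly larger than (the connected group isogenous to) the Zariski closure of $\mathrm{Ad}(\rho(\Gamma))$. A concrete obstruction: take $G=\mathrm{SL}_2(\mathbb{R})\times\mathrm{SL}_2(\mathbb{R})$, $\Lambda$ a lattice, and $\rho\colon\Gamma\to G$ the diagonal embedding of a Zariski dense hyperbolic subgroup of $\mathrm{SL}_2(\mathbb{R})$. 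Then the Zariski closure of $\mathrm{Ad}(\rho(\Gamma))$ is the diagonal $\mathrm{PSL}_2(\mathbb{R})$, so Theorem \ref{th:main-general}'s hypotheses hold; but for $x$ with dense orbit the orbit closure is all of $G/\Lambda$, so $H=G$ and $\rho(\Gamma)$ is emphatically not Ad-Zariski dense in $H$. In such a case your proposed invocation of Theorem \ref{th:main} on $(H,\Lambda_H,\rho)$ simply does not apply. Your fallback suggestion --- that the irreducibility hypothesis in Theorem \ref{th:main} ``is not essential to the argument'' --- is not something one can assert without seeing that argument, and in any case it does not address the Ad-Zariski-density failure above. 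The paper avoids all of this by never needing structural control over $G_Y$: Theorem \ref{th:BQ} directly supplies convergence of Ces\`aro averages to $\nu_Y$, which is then transported through loop semigroups (Lemma \ref{lem:usethis}, Lemma \ref{L:conv-RW}) and the combing machinery.
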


We also show that orbits along randomly chosen geodesic rays in $\Gamma$ equidistribute in $X$ (Theorem \ref{th:random_geos}); 
see Section \ref{sec:geo_equi} for details. 

\medskip
In fact, our methods apply beyond hyperbolic groups, to groups admitting a thick geodesic combing, as defined in \cite{GTT2} (see Section \ref{S:thick}). 
Such class of groups include relatively hyperbolic groups and right-angled Artin and Coxeter groups, for certain natural generating sets. 
Representations of such groups into $\textup{SL}(n, \mathbb{R})$ are a topic of considerable recent interest, especially in the context of higher Teichm\"uller theory
\cite{GW,GGKW, Zhu, ZZ1, ZZ2}.

The most general version of the theorem we prove is the following: 

\begin{theorem} \label{th:main-3}
Let $G$ be a real Lie group, $\Lambda < G$ a lattice, and let $\Gamma$ be a finitely generated group with generating set $S$, 
such that $(\Gamma, S)$ has a thick geodesic combing. 
Let $\rho \colon \Gamma \to G$ be a representation, and suppose that the Zariski closure of $\mathrm{Ad} (\rho(\Gamma))$ is Zariski connected, semisimple, and without compact factors. Let $x \in X$ such that the orbit closure $\overline{\Gamma x}$ is connected.
Then for any continuous $f \colon X \to \mathbb{R}$ with compact support, we have
\begin{equation} \label{E:main-3}
\frac{1}{N} \sum_{n \leq N} \frac{1}{\# S_n} \sum_{|w| = n} f(w^{-1} x)\to \int_X f \ d \nu_{\overline{\Gamma x}}.
\end{equation}
\end{theorem}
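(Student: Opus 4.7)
The plan is to reduce the sphere-average statement to an equidistribution statement for a Markov random walk on $X$ and then invoke Benoist--Quint equidistribution. The combinatorial input is the thick geodesic combing for $(\Gamma, S)$.

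First, using the thick geodesic combing from \cite{GTT2}, I would encode the sphere $S_n$ by length-$n$ paths in a finite directed labeled graph (automaton) $\Sigma$: each path maps via its edge labels to a geodesic word of length $n$, and this correspondence is a surjection onto $S_n$ with bounded multiplicity. The \emph{thickness} axiom provides a distinguished recurrent component $\Sigma_0 \subset \Sigma$ such that the proportion of length-$n$ paths which fail to spend a definite fraction of time in $\Sigma_0$ decays exponentially in $n$, and such that the transition matrix restricted to $\Sigma_0$ is primitive with a unique stationary distribution. Transferring the normalized counting measure on $S_n$ to a Markov measure $\mathbb{P}_n$ on length-$n$ paths in $\Sigma$ (which is what the Perron--Frobenius analysis of the combing automaton produces, up to errors from the non-thick part that vanish after Ces\`aro averaging) and then pushing forward via the edge-labelling and via $\rho$ yields a Markov random walk $Z_n = \rho(s_1)\rho(s_2)\cdots \rho(s_n)$ on $G$. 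Up to the aforementioned errors, \eqref{E:main-3} becomes
\begin{equation*}
\frac{1}{N} \sum_{n \leq N} \mathbb{E}\bigl[f(Z_n^{-1} x)\bigr] \to \int_X f \, d\nu_{\overline{\Gamma x}}.
\end{equation*}

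To establish this limit, I would decompose the Markov trajectory into successive excursions between visits to a fixed state in $\Sigma_0$. Increments of $Z$ across consecutive excursions then form an i.i.d.\ sequence with common law $\mu$, compactly supported in $\rho(\Gamma)$. Since $\Sigma_0$ is the recurrent component of the combing, labels of loops in $\Sigma_0$ based at our fixed state generate a subset of $\rho(\Gamma)$ whose $\mathrm{Ad}$-Zariski closure still satisfies the hypotheses of the theorem: Zariski connected, semisimple, without compact factors. Benoist--Quint \cite{BQ3} then yields that $\tfrac{1}{N} \sum_{n \leq N} \mu^{*n} * \delta_x$ converges weakly to $\nu_{\overline{\Gamma x}}$, as $\overline{\Gamma x}$ is by hypothesis connected and hence is the unique closed invariant subset supporting an appropriate $\mu$-stationary probability measure containing $x$. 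A renewal-type argument converts the indexing by excursion count back to the original indexing by sphere radius.

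The principal obstacle is the reduction from sphere counting to this i.i.d.\ model. One must (i) quantify the discrepancy between the counting measure on $S_n$ and the Markov measure on $n$-paths in $\Sigma$ using primitivity together with the growth asymptotics of $\# S_n$ provided by the combing; (ii) control the exponentially small contribution of paths that avoid $\Sigma_0$, and check that this error is absorbed by the Ces\`aro average in $N$; and (iii) verify that the Zariski density hypothesis descends from $\mathrm{Ad}(\rho(\Gamma))$ to the block-increment measure $\mu$, which requires that loops in $\Sigma_0$ based at a single vertex project onto a generating set whose $\mathrm{Ad}$-image remains Zariski connected, semisimple, and without compact factors. Once all three are secured, the Benoist--Quint equidistribution theorem closes the argument.
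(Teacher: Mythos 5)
Your proposal follows the same broad strategy as the paper: pass from the counting measure on spheres to a Markov measure on the combing automaton, decompose trajectories into i.i.d.\ excursions through a fixed recurrent vertex, verify that the Zariski density hypothesis descends to the loop semigroup, invoke Benoist--Quint equidistribution, and undo the renewal indexing. That said, there are two genuine gaps worth naming.

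First, the increment law $\mu$ over excursions is \emph{not} compactly supported. First-return loops to a fixed state have unbounded length, so the step distribution on $\rho(\Gamma)$ has unbounded support; your sentence asserting compact support is false, and the classical Benoist--Quint theorem \cite{BQ3} does not apply as stated. What one does have is an exponentially decaying return-time tail, hence finite first moment, which is exactly why the paper replaces \cite{BQ3} by the B\'enard--de Saxc\'e extension (Theorem~\ref{th:BQ}). Without this substitution your argument as written stalls at the invocation of equidistribution.

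Second, the assumption of a single ``distinguished recurrent component $\Sigma_0$'' whose restricted transition matrix is primitive is an oversimplification that fails in general. For a geodesic combing of a hyperbolic group the transition matrix is only \emph{almost semisimple} (Calegari--Fujiwara); there may be several maximal components, and a given maximal component can have period $p>1$, so its restriction need not be primitive. The paper handles this in two layers: first passing to the $p$-step graph $D_p$ to make the matrix semisimple and each maximal component primitive, then decomposing $c_N(f)$ into contributions $c_{N,\mathcal V}(f)$ from each maximal component $\mathcal V$, splitting according to time spent outside $\mathcal V$ (\`a la Pollicott--Sharp) and showing the ``long excursion outside'' tail is summable. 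The exponential decay you gesture at is part of this, but the multi-component bookkeeping and the $p$-step reduction are nontrivial and must be done explicitly; conflating them with a single primitive $\Sigma_0$ skips a genuine portion of the proof. Once both points are repaired, the rest of your outline (item~(iii), on passing Zariski connectedness, semisimplicity, and absence of compact factors to the loop semigroup via Goldsheid--Margulis plus finite index from thickness, as in Lemma~\ref{lem:usethis}) matches the paper's Lemma~\ref{lem:usethis} and Lemma~\ref{L:conv-RW}.
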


For instance, using \cite[Lemma 8.1]{GTT4}, the above theorem applies to the following situations:
\begin{itemize}
\item If $\Gamma$ is relatively hyperbolic with virtually abelian 
peripheral subgroups then there exists a generating set of $\Gamma$ with thick geodesic combing (see also \cite[Sections 2.3, 9]{GTT2});
\item If $\Gamma$ is a non-abelian, irreducible, right-angled Artin or Coxeter group, and $S$ is the vertex generating set, then $(\Gamma, S)$ has a thick geodesic combing
(see also \cite[Section 10]{GTT2}).
\end{itemize}

\medskip
One particularly concrete application of Theorem \ref{th:main-general} is the following. Let $M = \mathbb{H}^3/\Lambda$ be a finite volume hyperbolic $3$--manifold. It is well known by Shah \cite{Shah} and Ratner \cite{Ratner} that every totally geodesic hyperbolic plane in $M$ is either closed or dense. 
More precisely, for any $x \in G/\Lambda$ (where $G = \mathrm{PSL}_2(\mathbb{C})$ is the group of orientation preserving isometries of $\mathbb{H}^3$) the orbit $\mathrm{PSL}_2(\mathbb{R}) x$ is either closed or dense. Theorem \ref{th:main-general} implies that if $\Gamma$ is any discrete, nonelementary subgroup of $\mathrm{PSL}_2(\mathbb{R})$ and $S$ is any generating set of $\Gamma$, then either $\Gamma x$ is finite or spheres $S_n$ in the Cayley graph for $(\Gamma,S)$ equidistribute; i.e. 
averages of the counting measures on $S_n x$ converge to the invariant (Haar) measure on $G/\Lambda$.

\medskip
Another application of our techniques is to actions on tori, where in fact we do not need to take the Ces\`aro average to guarantee convergence.

\begin{theorem} \label{th:tori}
Let $\Gamma<\mathrm{SL}(d, \mathbb{Z})$ be a Zariski dense hyperbolic group. Let $x\in \mathbb{T}^d$ be any irrational point. Then for any continuous $f : \mathbb{T}^d \to \mathbb{R}$ we have 
\begin{equation} \label{E:main-torus}
\frac{1}{\# S_n} \sum_{|w| = n} f(w^{-1} x)\to \int f dm 
\end{equation}
where the integral is taken with respect to Haar measure on $\mathbb{T}^d$.
\end{theorem}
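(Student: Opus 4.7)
The plan is to Fourier-decompose continuous functions on $\mathbb{T}^d$ and prove \emph{pointwise} decay of sphere averages applied to each non-trivial character, then to transfer a BFLM-style equidistribution statement from random walks to spherical averages.

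\textbf{Fourier reduction.} By density of trigonometric polynomials in $C(\mathbb{T}^d)$, it suffices to prove, for every $k \in \mathbb{Z}^d\setminus\{0\}$,
\[
A_n(k,x) := \frac{1}{\#S_n}\sum_{|w|=n}e^{2\pi i\langle k,\, w^{-1}x\rangle}\longrightarrow 0.
\]
Writing $\langle k, w^{-1}x\rangle = \langle (w^{-1})^T k,\, x\rangle$ and using that $w\mapsto w^{-1}$ is a bijection on the symmetric sphere $S_n$, one rewrites $A_n(k,x) = \frac{1}{\#S_n}\sum_{|w|=n} e^{2\pi i\langle w^T k,\, x\rangle}$, a character sum indexed by the dual $\Gamma$-orbit of $k$. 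Zariski density of $\Gamma$ in $\mathrm{SL}(d,\mathbb{R})$ implies that the dual orbit $\{w^T k : w\in\Gamma\}$ is infinite for every $k\ne 0$ (a finite nonzero orbit would yield an invariant sublattice contradicting Zariski density), and moreover that the stabilizer of $k$ in $\Gamma^T$ does not absorb a positive proportion of $S_n$. A Plancherel computation then gives $\|A_n(k,\cdot)\|_{L^2(\mathbb{T}^d)}^2 = O(1/\#S_n)$, i.e. $L^2$-decay, which however does not yet control $A_n(k,x)$ at a single given irrational $x$.

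\textbf{Upgrading $L^2$-decay to pointwise decay at irrational $x$.} The key input beyond Theorem \ref{th:main} is a sphere-average analogue of the theorem of Bourgain--Furman--Lindenstrauss--Mozes: for Zariski dense $\Gamma<\mathrm{SL}(d,\mathbb{Z})$, the push-forward measures $\frac{1}{\#S_n}\sum_{|w|=n}\delta_{w^{-1}x}$ equidistribute to Haar measure at every irrational $x\in\mathbb{T}^d$, which amounts to $A_n(k,x)\to 0$ for every fixed $k\ne 0$. Once this pointwise decay is established, one recovers \eqref{E:main-torus} for arbitrary continuous $f$ by a standard uniform approximation by trigonometric polynomials, using the trivial bound $|A_n(f,x)|\le\|f\|_\infty$ to absorb the tail of the approximation.

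\textbf{Main obstacle.} The chief difficulty is that classical BFLM applies only to convolution measures $\mu^{*n}$, whereas sphere measures $\frac{1}{\#S_n}\sum_{|w|=n}\delta_w$ are not convolutions, and Cesàro convergence (Theorem \ref{th:main}) is by itself too weak. To transfer BFLM to spheres we exploit the automatic structure of a hyperbolic $(\Gamma, S)$ (or more generally a thick geodesic combing, as used elsewhere in the paper): the sphere $S_n$ is encoded by paths of length $n$ in a finite-state automaton, so the sphere-averaging operator on $L^2(\mathbb{T}^d)$ is governed by a transfer operator whose spectral gap on each non-zero Fourier mode is provided by Zariski density of the $\Gamma$-action on $\mathbb{R}^d$. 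Producing this spectral gap for the transfer operator, uniformly across Fourier modes with $x$ bounded away from rationals, is the technical heart of the proof.
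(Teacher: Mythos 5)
Your approach is genuinely different from the paper's, and it has a real gap at its core. You reduce to Fourier modes $A_n(k,x)$ and correctly identify the obstacle: the classical BFLM theorem applies to convolution powers $\mu^{*n}$, not to sphere measures. You then propose to resolve this by establishing a ``spectral gap for the transfer operator on each nonzero Fourier mode, uniformly across modes with $x$ bounded away from rationals,'' and you explicitly call this ``the technical heart of the proof.'' But that is not an argument; it is a restatement of the theorem in transfer-operator language. Proving such a uniform spectral gap for the $S_n$-averaging operator on nontrivial Fourier modes would amount to rebuilding the BFLM machinery from scratch for the automatic structure of $\Gamma$, and nothing in the proposal indicates how to do it. As written, the proposal is circular: the ``sphere-average analogue of BFLM'' you invoke \emph{is} the statement $A_n(k,x)\to 0$, i.e.\ the content of the theorem.

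The paper takes a much shorter route that you did not find, and it is worth understanding why it works. The key external input is the theorem of He--de Saxc\'e \cite{HdS}, which strengthens BFLM by removing the Ces\`aro average for genuine random walks: for a Zariski dense $\Gamma<\mathrm{SL}(d,\mathbb{Z})$ and irrational $x$, the random walk satisfies $f(w_n^{-1}x)\to\int f\,dm$ almost surely, with no averaging over $n$. The paper then feeds this into the Markov-chain decomposition built in Sections~3--6: almost every infinite path in the geodesic combing decomposes as a bounded prefix followed by an i.i.d.\ sequence of loops at a fixed vertex $v_j$, so the positions at return times to $v_j$ form a bona fide random walk driven by the loop measure $\mu_j$ on the loop semigroup $\Gamma_j$. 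By Lemma~\ref{lem:usethis}, $\Gamma_j$ still has the same Zariski closure. Applying He--de Saxc\'e to this random walk gives $f(\gamma_{R(n,j)}^{-1}x)\to\int f\,dm$ almost surely along return times; since at every step the chain sits at \emph{some} vertex $v_j$, the convergence holds along \emph{all} times $n$, not just return times. Integrating over paths gives the analogue of Lemma~\ref{lem:markov_prim} without the Ces\`aro sum, and then in Proposition~\ref{prop:prim_case} one simply uses that both sequences $(a_n)$ and $(b_n)$ converge individually, so $(a_n b_n)$ converges without averaging. No Fourier analysis, no new spectral gap estimate. The step you are missing is precisely the observation that the combing structure lets you realize sphere averages as functionals of a genuine random walk on a loop semigroup, to which the non-Ces\`aro version of BFLM already applies.
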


Theorem \ref{th:tori} follows by using the recent work of He--de Saxc\'e  \cite{HdS} (extending work of \cite{BFLM}) in place of \cite{BQ3}. 
In fact, it extends to more general nilmanifolds, using \cite{HLL}. 
This leads us to ask the following question:

\begin{question}
In the context of Theorem \ref{th:main}, for which representations $\rho\colon \Gamma \to G$ can the Ces\`aro average in eq. \eqref{E:main-1} be removed?
\end{question}

This is closely related to the well-known question of Benoist--Quint \cite[Question 3]{BQ4} concerning whether the Ces\`aro average appearing in Theorem \ref{th:BQ} can be removed. Although partial progress has been made in \cite{B}, 
the hypotheses there are incompatible with the case of interest here; the measures $\mu_j$ appearing in the proof of Lemma \ref{lem:markov_prim} have the property that distinct convolution powers have disjoint support.

\subsection*{Acknowledgements} 

S. Taylor is partially supported by NSF grant DMS-2102018 and the Sloan Foundation. 
G. Tiozzo is partially supported by NSERC RGPIN-2017-06521 and an Ontario Early Researcher Award.

\section{Geodesic combings}
\label{sec:background}
 
In this section, we recall some basic properties of graph structures and geodesic combings of groups. 
For hyperbolic groups, the essential features are due to Cannon \cite{Cannon} and Calegari--Fujiwara \cite{CF}.
For the general case, we refer to \cite{GTT2}, \cite{GTT4}. 

Fix a finitely generated group $\Gamma$ and any finite subset $S \subset \Gamma$, which we usually take to be a generating set of $\Gamma$.
 A \emph{graph structure} for $(\Gamma, S)$ is a triple $(D,v_0,\ev)$, where 
$D$ is a finite directed graph, $v_0$ is a vertex of $D$ which we call its \emph{initial vertex}, and $\ev \colon E(D) \to S\subset G$ is a map that labels the edges of $D$ with elements from $S$. 
We extend the map $\ev$ by defining for each finite (always directed) path $g = g_1 \dots g_n$ the group element 
$\ev(g) = \ev(g_1) \dots \ev(g_n)$ in $G$. 
To simplify notation, we will use 
$\overline{g} = \ev(g)$ to denote the group 
element associated to the path $g$. Additionally, if there is an action $\Gamma \curvearrowright X$, we write $gx$ to mean $\overline{g}x$ for a path $g$ in $D$.

For a graph structure $D$, we define $\Omega$ to be the set of all infinite paths starting at any vertex of $\Gamma$. By $\Omega^n$ we mean the set of all paths of length $n$ and set $\Omega^* = \cup_{n\ge1} \Omega^n$. Further, if $v$ is a vertex of $D$, then $\Omega_v$ (or $\Omega_i$ if $v =v_i$) is the set of infinite paths starting at $v$, and similarly for $\Omega_v^n$.

The graph structure $D$ is \emph{geodesic}  if the map $\ev \colon \Omega_{v_0}^* \to \Gamma$ is injective and length preserving when $\Gamma$, or more precisely the subgroup generated by $S$, is given the word metric for the generating set $S$. If it is also surjective, then $D$ is said to be a \emph{geodesic combing}.
In this case, evaluation induces a bijection from $\Omega_{v_0}^n$ to $S_n$ for each $n\ge1$, where $S_n$ is the sphere of radius $n$ with respect to $S$.
In this paper, each graph structure will come from starting with a geodesic combing $D$ for $\Gamma$ and applying one or both of the following operations:
\begin{enumerate}
\item restrict the evaluation map to some subgraph $D'$; if the subgraph does not contain $v_0$ then choose an arbitrary vertex of $D'$, or
\item replace the graph $D$ with its associated $p$-step graph structure $D_p$. The vertices of $D_p$ are equal to those of $D$ and each edge of $D_p$ (and its label) naturally corresponds to a path of length $p$ in $D$.
\end{enumerate}
We observe that if $D$ is any geodesic graph structure, then so are each of $D'$ and $D_p$ as defined in items $(1)$ and $(2)$ above.

According to Cannon \cite{Cannon}, for any hyperbolic group and any finite generating set there is an associated geodesic combing. 

\subsubsection*{Structure of geodesic combings for hyperbolic groups}
We define two vertices $v_i, v_j$ to be \emph{equivalent} if there is a path from $v_i$ to $v_j$ and a path from $v_j$ to $v_i$, 
and the \emph{(recurrent) components} of $D$ as the equivalence classes for this relation. 

We denote by $A$ the transition matrix for $D$. By Perron--Frobenius, $A$ has a real eigenvalue of largest modulus, 
which we will denote by $\lambda$. 
Following Calegari--Fujiwara \cite{CF}, we say that 
the matrix $A$ is
 \emph{almost semisimple} if for any eigenvalue of maximal modulus, its geometric and algebraic multiplicity agree. For example, Calegari--Fujiwara prove that when $D$ is a geodesic combing of a hyperbolic group $A$ always satisfies this property.
 We additionally call a geodesic combing (or more generally a graph structure) \emph{semisimple} or \emph{primitive} if its transition matrix has those properties. Recall that a matrix is \emph{semisimple} if its only eigenvalue of maximal modulus is real positive and \emph{primitive} if it has a positive power. In general, primitive $\implies$ semisimple $\implies$ almost semisimple.

Let $D$ be almost semisimple, and let $\lambda$ be the leading eigenvalue of $A$. Then we say a vertex $v$ is 
of \emph{large growth} if 
$$\lim_{n \to \infty} \frac{1}{n} \log \# \{ \textup{paths of length }n\textup{ starting at }v \} = \lambda$$
and of \emph{small growth} otherwise (in which case the limit above is $< \lambda$). 
Furthermore, a component $C$ of $D$ is \emph{maximal} if 
$$\lim_{n \to \infty} \frac{1}{n} \log \# \{ \textup{paths of length }n\textup{ inside }C \} = \lambda.$$

The component-wise structure of $D$ is as follows: there is no path between maximal components and vertices of large growth are precisely the ones which have a path to a maximal component. See \cite{CF} or \cite{GTT2}.

\subsubsection*{Loop semigroups and thickness}\label{S:thick} Given a vertex $v$, we denote as $D_v$ the \emph{loop semigroup} of $v$, i.e. the set of all finite paths from $v$ to itself. 
This is a semigroup under concatenation, and all its elements lie entirely in the component of $v$. The evaluation map embeds $D_v$ into $G$ as a semigroup which we denote by $\Gamma_v$ .
Abusing terminology slightly, we also refer to $\Gamma_v$ as the loop semigroup.

Finally, we recall that any geodesic combing of a hyperbolic group has a fundamental property which we call \emph{thickness}: for any vertex $v$ in a maximal component, there is a finite set $B \subset \Gamma$ such that $\Gamma = B \cdot \Gamma_v \cdot B$. Here,  the equality is in the group $G$. See \cite[Lemma 8.1]{GTT4}
and the references therein. In general, any thick geodesic combing of a finitely generated group is automatically almost semisimple \cite[Lemma 2.3]{GTT4}.

We conclude by remarking that if $D$ is a thick geodesic combing, then the graph structures obtained by either restricting to a subgraph $D'$ of maximal growth or taking the $p$-step graph structure $D_p$ are themselves thick.
See \cite[Section 7]{GTT4} for details.

\section{Random walks on $\Gamma$ and passing to loop semigroups}
\label{sec:passing}
Let $G$ be a real Lie group and $\Lambda$ a lattice in $G$. Let $\Gamma$ be a subsemigroup of $G$ which is generated by the support of a Borel probability measure $\mu$. 

A closed subspace $Y \subset G/\Lambda$ is called \emph{homogeneous} if its stabilizer $G_Y \le G$ acts transitively on $Y$. If, in addition, $G_Y$ preserves a Borel probability measure, $Y$ is said to have \emph{finite volume}. Such a measure is unique and is denoted by $\nu_Y$. If the subsemigroup $\Gamma$ is a subgroup of $G_Y$, then $Y$ is $\Gamma$--invariant and if the action $\Gamma \curvearrowright (Y,\nu_Y)$ is ergodic, then $Y$ is called $\Gamma$--ergodic. 

\smallskip
The following theorem is due to Benoist--Quint in the case where $\mu$ is compactly supported. The generalization stated here, required for our application, is due to B\'enard--de Saxc\'e.

\begin{theorem}[Benoist--Quint \cite{BQ3}, B\'enard--de Saxc\'e \cite{BdS} Theorem C ]
\label{th:BQ}
Suppose that the Zariski closure of $\mathrm{Ad }(\Gamma) \le \mathrm{GL}(\mathfrak{g})$ is Zariski connected and semisimple with no compact factors. Further assume that the measure $\mu$ has finite first moment. Then
\begin{enumerate}
\item The orbit closure $Y = \overline{\Gamma x} \subset G/\Lambda$ is a $\Gamma$--invariant ergodic finite volume closed homogeneous subspace.
\item The sequence of measures $\left(\frac{1}{n} \sum_{k=0}^{n-1} \mu^{*k} *\delta_x \right)_{n\ge 1}$ converges to $\nu_Y$ in the weak--$*$ topology.
\item For $\nu^{\otimes \mathrm{N}^*}$--almost every sequence $(g_i)_{i\ge 1}$, the sequence of empirical measures $\left(\frac{1}{n} \sum_{k=0}^{n-1} \delta_{g_k \ldots g_1 x} \right)_{n\ge 1}$ converges to $\nu_Y$ in the weak--$*$ topology. 
\end{enumerate}
\end{theorem}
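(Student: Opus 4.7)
The plan is to proceed in three stages, following the strategy of Benoist--Quint with B\'enard--de Saxc\'e's extension to the finite first moment setting. The first stage reduces the three conclusions to a \emph{classification of $\mu$--stationary measures}: every $\mu$--stationary Borel probability measure $\nu$ on $G/\Lambda$ is in fact $\Gamma$--invariant, and any ergodic such $\nu$ is the Haar measure $\nu_Y$ on some finite--volume closed homogeneous $\Gamma$--ergodic subspace $Y$. Granting this classification, parts $(1)$--$(3)$ follow by general random walk considerations: any weak--$*$ accumulation point of the Ces\`aro averages $\frac{1}{n}\sum_{k=0}^{n-1} \mu^{*k} * \delta_x$ is $\mu$--stationary, so by ergodic decomposition its components are homogeneous; a support/separation argument using the orbit closure forces the accumulation point to be $\nu_Y$ for $Y = \overline{\Gamma x}$, yielding (2) and (1). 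Statement (3) is obtained by upgrading Ces\`aro convergence in law to $\nu^{\otimes \mathbb{N}^*}$--almost sure convergence of empirical measures via Breiman's strong law applied to the skew product $\Gamma^{\mathbb{N}^*} \times G/\Lambda$ with the stationary measure $\mu^{\otimes \mathbb{N}^*} \otimes \nu_Y$.

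The second and main stage is the classification of $\mu$--stationary measures, which for compactly supported $\mu$ is the content of Benoist--Quint's \emph{exponential drift} argument, modeled on Ratner's work for unipotent flows. I would fix an ergodic stationary $\nu$, assume for contradiction that it is not $\Gamma$--invariant, and combine three ingredients. First, Furstenberg's martingale representation writes $\nu$ as the barycenter of conditional measures along $\mu^{\otimes \mathbb{N}}$--typical trajectories. Second, the Oseledets multiplicative ergodic theorem applied to the $\mathrm{Ad}$--cocycle, together with Zariski connectedness, semisimplicity and absence of compact factors for $\overline{\mathrm{Ad}(\Gamma)}$, provides a rich Lyapunov spectrum with stable/unstable foliations in $\mathfrak{g}$. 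Third, one pairs two nearby trajectories whose relative position in $G/\Lambda$ drifts exponentially in a prescribed unipotent direction $u$; averaging the Furstenberg barycenters along the pair forces $\nu$ to be $u$--invariant. The Ratner--Margulis--Tomanov rigidity for unipotent actions on $G/\Lambda$ then promotes this to homogeneity of $\nu$, contradicting the assumption.

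The third stage is B\'enard--de Saxc\'e's extension to $\mu$ with finite first moment. The obstruction is that exponential drift requires uniform control of the cocycle along trajectories, which for non--compactly supported $\mu$ holds only after truncation. I would truncate $\mu$ to compactly supported $\mu_R$, apply Benoist--Quint to each $\mu_R$, and pass to the limit using the first moment to control the discrepancy between $\mu$--trajectories and $\mu_R$--trajectories via Birkhoff--type tail estimates. The key additional input is a uniform recurrence / tightness statement for $\mu_R$--stationary measures on the non--compact space $G/\Lambda$, guaranteeing that the limit of homogeneous measures stays homogeneous rather than escaping to infinity.

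The hard part, as in the original works, will be the exponential drift step: producing nearby trajectories that separate in a controlled unipotent direction requires delicate regularity of the Oseledets decomposition together with the recurrence of $\mu$--stationary measures on the non--compact homogeneous space; once the stationary measure classification is in hand, deducing (1), (2), (3) and the first--moment extension are relatively formal.
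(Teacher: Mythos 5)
This statement is not proved in the paper at all: Theorem \ref{th:BQ} is quoted verbatim from Benoist--Quint \cite{BQ3} and B\'enard--de Saxc\'e \cite{BdS} (Theorem C there), and the paper uses it as a black box. There is therefore no ``paper's own proof'' to compare against.

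That said, your sketch is a reasonable high-level road map of the argument carried out in those references: the reduction of parts (1)--(3) to the classification of ergodic $\mu$--stationary measures as Haar measures on finite-volume homogeneous subspaces, the exponential drift mechanism (Furstenberg conditional measures along typical trajectories, Oseledets splitting for the $\mathrm{Ad}$--cocycle, pairing of nearby trajectories to extract extra unipotent invariance), the upgrade to homogeneity via Ratner/Margulis--Tomanov, and Breiman's law for the almost-sure statement (3). A few caveats. First, the passage from stationary-measure classification to (1) and (2) is not purely formal in the non-compact setting: one needs non-escape of mass for the Ces\`aro averages, which for compactly supported $\mu$ comes from Eskin--Margulis type recurrence estimates and which you only flag in the finite-first-moment stage; this ingredient is needed already for the compactly supported case. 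Second, your description of the B\'enard--de Saxc\'e extension as ``truncate $\mu$ to $\mu_R$, apply Benoist--Quint, and pass to the limit'' is an oversimplification of the actual argument, which requires reworking the drift and recurrence estimates directly under a first-moment hypothesis rather than deducing the statement from the compactly supported case by approximation. Finally, this theorem represents hundreds of pages of work, so a sketch at this granularity cannot constitute a proof; it is, however, an accurate summary of the known strategy, which is all that could be expected here.
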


The following lemma allows us to pass conditions from $\Gamma$ to loop semigroups of the geodesic combing. This is a fundamental step toward applying Theorem \ref{th:BQ} in the proof of Theorem \ref{th:main-3}.

\begin{lemma}
\label{lem:usethis}
Suppose that $\Gamma$ is a group satisfying the hypotheses of Theorem \ref{th:main-general} (or more generally Theorem \ref{th:main-3})
and that $\Gamma_v$ is a subsemigroup of $\Gamma$ with the property that the group $\Gamma_v^\pm$ generated by $\Gamma_v$ has finite index in $\Gamma$. Then
\begin{enumerate}
\item the Zariski closures of $\mathrm{Ad}(\Gamma_v)$ and $\mathrm{Ad}(\Gamma)$ in $\mathrm{GL}(\mathfrak{g})$ are equal, and
\item if $Y = \overline{\Gamma x} \subset G/\Lambda$ is connected, then $Y$ is also the orbit closure of $\Gamma_v  x$. 
\end{enumerate}
\end{lemma}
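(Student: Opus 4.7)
I would prove the two parts of the lemma in turn.

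\medskip

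\emph{Part (1).} The plan is to rely on the classical algebraic fact that the Zariski closure of a sub-semigroup of a linear algebraic group is automatically a sub-group. Applied to $\mathrm{Ad}(\Gamma_v)\subset\mathrm{GL}(\mathfrak{g})$, this shows that $\overline{\mathrm{Ad}(\Gamma_v)}^Z$ is a Zariski-closed subgroup, hence it contains the inverse of every element of $\mathrm{Ad}(\Gamma_v)$, and therefore the whole group $\mathrm{Ad}(\Gamma_v^\pm)$. The reverse inclusion being trivial, I would conclude $\overline{\mathrm{Ad}(\Gamma_v)}^Z=\overline{\mathrm{Ad}(\Gamma_v^\pm)}^Z$. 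Since $\Gamma_v^\pm$ has finite index in $\Gamma$, the latter is a finite-index Zariski-closed subgroup of $\overline{\mathrm{Ad}(\Gamma)}^Z$. But that ambient closure is Zariski connected by hypothesis, and a Zariski-connected algebraic group admits no proper Zariski-closed subgroup of finite index (otherwise the finitely many cosets would decompose it into a disjoint union of nonempty Zariski-closed sets). Hence the three closures all coincide.

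\medskip

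\emph{Part (2).} Set $Y:=\overline{\Gamma x}$ (connected by hypothesis) and $Z:=\overline{\Gamma_v x}\subseteq Y$. I would argue in two steps.

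First, I would show $\overline{\Gamma_v^\pm x}=Y$. Choosing coset representatives $\gamma_1=1,\gamma_2,\ldots,\gamma_k$ for $\Gamma_v^\pm$ in $\Gamma$, one has
\[
Y=\bigcup_{i=1}^{k} Y_i,\qquad Y_i:=\overline{\Gamma_v^\pm \gamma_i x}.
\]
Each conjugate $\gamma_i^{-1}\Gamma_v^\pm\gamma_i$ has Ad-Zariski closure equal to that of $\Gamma$ (since $\mathrm{Ad}(\gamma_i)$ lies in this closure, which is a group), so Theorem~\ref{th:BQ} applies and each $Y_i$ is a finite-volume closed homogeneous subspace of $G/\Lambda$. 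The Benoist--Quint structure theory identifies the dimension of such an orbit closure as a function only of the Ad-Zariski closure of the acting semigroup (together with the basepoint), so all the $Y_i$ share a common dimension $d$. A connected manifold is never a finite union of closed subsets of strictly smaller dimension, forcing $d=\dim Y$. Each $Y_i$ is then a full-dimensional homogeneous submanifold of $Y$, hence both open and closed, and by connectedness of $Y$ we get $Y_i=Y$ for every $i$.

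Second, I would apply Theorem~\ref{th:BQ} directly to the semigroup $\Gamma_v$, valid by (1), to obtain that $Z$ is itself a finite-volume closed homogeneous subspace of $G/\Lambda$. By the same structural input, $\dim Z$ is determined by the Ad-Zariski closure of $\Gamma_v$, which by (1) equals that of $\Gamma$; thus $\dim Z=\dim Y$. Consequently $Z\subseteq Y$ is a closed, full-dimensional homogeneous submanifold, hence open in $Y$; being open, closed, and nonempty in the connected space $Y$, we conclude $Z=Y$.

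\medskip

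\emph{Main obstacle.} The delicate input is the appeal to the Benoist--Quint/B\'enard--de Saxc\'e structure theory to conclude that the dimension of an orbit closure depends only on the Ad-Zariski closure of the acting semigroup. What one actually needs is to identify the connected Lie subgroup of $G$ whose orbit equals $\overline{\Gamma' x}$ in terms of $\overline{\mathrm{Ad}(\Gamma')}^Z$; once this identification is available, part (1) immediately equates the relevant dimensions and the topological open-and-closed argument finishes the proof.
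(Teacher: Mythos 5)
Your part (1) is correct and follows essentially the same path as the paper: the Zariski closure of a sub-semigroup of a linear algebraic group is a subgroup (the paper cites Goldsheid--Margulis \cite[Lemma 3.3]{GM} for this), hence $\mc Z(\Gamma_v) = \mc Z(\Gamma_v^\pm)$, and then Zariski-connectedness of $\mc Z(\Gamma)$ rules out a proper finite-index Zariski-closed subgroup. No issues here.

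Your part (2), however, has a genuine gap, and you correctly flag it yourself as the ``main obstacle'': the claim that the dimension of $\overline{\Gamma' x}$ is a function only of the Ad-Zariski closure of $\Gamma'$ (for fixed $x$). This is not a consequence of Theorem \ref{th:BQ} as stated, and it is precisely the sort of statement the lemma is trying to establish, so invoking it is circular in spirit. Both your inner steps lean on it: first to equate $\dim Y_i$ across $i$ (your $Y_i$ are \emph{not} translates of one another -- $Y_i = \gamma_i \overline{(\gamma_i^{-1}\Gamma_v^\pm\gamma_i)x}$ is a translate of the orbit closure of a conjugate semigroup, not of $Y_1$), and second to equate $\dim Z$ with $\dim Y$. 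The paper sidesteps this entirely with a short stabilizer argument: since $Y_v := \overline{\Gamma_v x}$ is homogeneous, its stabilizer $G_{Y_v}$ is a \emph{group} containing $\Gamma_v$, hence it contains the generated group $\Gamma_v^\pm$; therefore $\Gamma_v^\pm x \subseteq G_{Y_v}x = Y_v$, giving $Y_v^\pm \subseteq Y_v$ and thus $Y_v = Y_v^\pm$ with no dimension count at all. Only then does the paper use a coset decomposition, and it uses \emph{left} cosets $\Gamma = \bigcup_b b\Gamma_v^\pm$, so that $Y = \bigcup_b bY_v$ is a finite union of genuine diffeomorphic translates of $Y_v$; the open-and-closed/connectedness argument then finishes cleanly. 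If you replace your right-coset decomposition with this left-coset one and insert the stabilizer-group observation in place of the dimension claim, your argument closes.
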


\begin{proof}
For item $(1)$, first note that by Goldsheid--Margulis \cite[Lemma 3.3]{GM}, the Zariski closure $\mc Z(\Gamma_v)$ is a group, which implies 
$\mc Z(\Gamma_v) \supseteq \Gamma_v^\pm$.  Hence, $\mc Z(\Gamma_v) = \mc Z( \Gamma_v^\pm)$ and since $\Gamma_v^\pm$ is finite index in $\Gamma$, 
the subgroup $\mc Z( \Gamma_v)$ is finite index in $\mc Z( \Gamma)$ and thus a finite union of components. But since $\mc Z( \Gamma)$ is Zariski connected, this implies that $\mc Z(\Gamma_v) = \mc Z( \Gamma)$ as claimed.

For item $(2)$, let $Y_v, Y_v^\pm, Y$ be the orbit closures of $\Gamma_v, \Gamma_v^\pm, \Gamma$, respectively, based at $x$. By Theorem \ref{th:BQ} and the first item, each of these is a finite volume homogeneous space. 
Since $G_{Y_v}$ is a group containing $\Gamma_v$, it also contains $\Gamma_v^\pm$, hence $\Gamma_v^\pm x \subseteq G_{Y_v} x$ 
and by taking the closures $Y_v^\pm \subseteq Y_v$. Since $Y_v \subseteq Y_v^\pm$ by definition, we obtain $Y_v = Y_v^\pm$, 
hence also $G_{Y_v^\pm} = G_{Y_v}$.

Write $\Gamma = \bigcup_{b\in B} b\Gamma_v^\pm$ for a finite set $B \subseteq \Gamma$, which we can assume to contain the identity, so that $Y = \bigcup_{b \in B} b  Y_v$. Hence the smooth properly embedded submanifold $Y$ is a finite union of smooth properly embedded submanifolds diffeomorphic to $Y_v$, and so each 
$b  Y_v$ is the union of connected 
components of $Y$. 
When $Y$ is connected, we conclude that $Y = Y_v$ and $G_Y = G_{Y_v}$. 
\end{proof}

We will sometimes use the notation
$$\overline{f}(x) := \int f \ d \nu_{\overline{\Gamma x}},$$
which we observe determines a $\Gamma$--invariant function.  

\begin{lemma} \label{L:conv-RW}
Let $\mu$ be a generating measure on $\Gamma < G$, and suppose that the Zariski closure of $\mathrm{Ad}(\Gamma)$ is Zariski connected and semisimple
without compact factors. 
Let  $w_n = g_1 \dots g_n$ be the right random walk driven by $\mu$. 
Let $f \colon X \to \mathbb{R}$ be continuous, compactly supported. Then for any $g, h \in \Gamma$ and any $x \in X$, we have 
$$\frac{1}{N} \sum_{n \leq N} f(g w_n^{-1} h x) \to \overline{f}(x)$$
for almost every $(w_n)$. 
\end{lemma}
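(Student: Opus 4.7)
Plan: The idea is to reverse the random walk. Writing $w_n^{-1} = g_n^{-1} \cdots g_1^{-1}$, one sees that $(w_n^{-1})_{n \ge 1}$ is a \emph{left} random walk driven by the reflected measure $\check\mu$ defined by $\check\mu(A) = \mu(A^{-1})$. The result then follows from Theorem \ref{th:BQ} applied to $\check\mu$.

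First I verify that $\check\mu$ satisfies the hypotheses of Theorem \ref{th:BQ}. Let $\check\Gamma$ denote the semigroup generated by $\mathrm{supp}(\check\mu) = \mathrm{supp}(\mu)^{-1}$; as a subset of $G$ one has $\check\Gamma = \Gamma^{-1}$. Since the Zariski closure $\mathcal{Z}(\mathrm{Ad}(\Gamma))$ is a group (Goldsheid--Margulis, as used in Lemma \ref{lem:usethis}), it contains $\mathrm{Ad}(\Gamma)^{-1} = \mathrm{Ad}(\check\Gamma)$, and a symmetric argument gives the reverse containment. Hence $\mathcal{Z}(\mathrm{Ad}(\check\Gamma)) = \mathcal{Z}(\mathrm{Ad}(\Gamma))$ is again Zariski connected, semisimple, and without compact factors, and the finite first moment condition is inherited by $\check\mu$ trivially.

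Applying Theorem \ref{th:BQ}(3) to $\check\mu$ at the starting point $y := hx$ then yields that for $\mu^{\otimes \mathbb{N}^*}$--a.e. sequence $(g_i)$ and every continuous, compactly supported $\varphi \colon X \to \mathbb{R}$,
$$\frac{1}{N}\sum_{n \le N} \varphi(w_n^{-1} hx) \longrightarrow \int \varphi \, d\nu_{\overline{\check\Gamma hx}}.$$
Choosing $\varphi = f \circ L_g$, where $L_g$ denotes left multiplication by $g$, reproduces the sum in the lemma. To identify the limit with $\overline{f}(x)$, let $\Gamma^\pm$ be the group generated by $\mathrm{supp}(\mu)$. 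By the stabilizer argument in Lemma \ref{lem:usethis}, the stabilizer $G_{\overline{\check\Gamma hx}}$ is a group containing $\check\Gamma$ and hence $\Gamma^\pm$; combined with $\check\Gamma \subseteq \Gamma^\pm$ this forces $\overline{\check\Gamma hx} = \overline{\Gamma^\pm hx}$. Since $h \in \Gamma \subseteq \Gamma^\pm$ we have $\Gamma^\pm hx = \Gamma^\pm x$, and the same stabilizer argument applied to $\overline{\Gamma x}$ gives $\overline{\Gamma^\pm x} = \overline{\Gamma x}$. Finally, $g \in \Gamma \subseteq G_{\overline{\Gamma x}}$, and $\nu_{\overline{\Gamma x}}$ is Haar on this homogeneous space and hence $L_g$--invariant, so
$$\int (f \circ L_g)\,d\nu_{\overline{\Gamma x}} = \int f\,d\nu_{\overline{\Gamma x}} = \overline{f}(x).$$

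The main technical subtlety is the orbit-closure identification $\overline{\check\Gamma hx} = \overline{\Gamma x}$, which requires passing from the subsemigroup $\check\Gamma$ to the ambient group $\Gamma^\pm$; this is precisely the Goldsheid--Margulis-based step from Lemma \ref{lem:usethis}, combined with the homogeneity assertion of Theorem \ref{th:BQ}(1).
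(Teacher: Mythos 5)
Your proposal takes essentially the same approach as the paper's proof: reverse the walk to a left random walk driven by $\check\mu$, apply Theorem \ref{th:BQ} at $y = hx$, then pull $g$ inside the test function and use invariance of the Haar measure. The paper's proof is terser — it simply writes that the limit is $\int \varphi \, d\nu_{\overline{\Gamma y}}$ and notes $\overline{\Gamma x} = \overline{\Gamma y}$ and that $g$ is measure-preserving — while you spell out the two points the paper leaves implicit: that $\check\mu$ inherits the Zariski-density hypotheses (via Goldsheid--Margulis, so that $\mathcal{Z}(\mathrm{Ad}(\check\Gamma)) = \mathcal{Z}(\mathrm{Ad}(\Gamma))$), and that the orbit closure $\overline{\check\Gamma hx}$ produced by Theorem \ref{th:BQ} coincides with $\overline{\Gamma x}$ by the stabilizer argument of Lemma \ref{lem:usethis}. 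These are genuine details worth recording, especially since the lemma is later invoked with $\mu$ supported on a loop semigroup, where the distinction between the semigroup, its inverse, and the group they generate is material.
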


\begin{proof}
We apply Theorem \ref{th:BQ} to the measure $\check{\mu}(g) := \mu(g^{-1})$. Then a sample 
path for the left random walk driven by $\check{\mu}$ is given by 
$h_n \dots h_1 = (g_n)^{-1} \dots (g_1)^{-1} = (g_1 \dots g_n)^{-1}$
where $g_1 \dots g_n$ is a sample path for the right random walk driven by $\mu$. Hence by Theorem \ref{th:BQ}, 
for any $y \in X $ and any $\varphi$ we have
$$\frac{1}{N} \sum_{n \leq N}  \varphi(w_n^{-1} y)  \to \int \varphi \ d \nu_{\overline{\Gamma y}}.$$
Then apply the above equation with $y = h x$, $\varphi(x) = f(g x)$, using that 
the action of $g$ is measure-preserving and that $\overline{\Gamma x} = \overline{\Gamma y}$.
\end{proof}

\section{Convergence for Markov chains} \label{S:prim}
Once and for all, let us fix a countable group $\Gamma$. Throughout, we consider various thick, geodesic graph structures for $\Gamma$ whose properties are weakened over the next few sections, culminating in Section \ref{sec:general} where arbitrary thick geodesic combings are considered. 

We also fix the hypotheses of Theorem \ref{th:main-3}. That is,
\begin{itemize}
\item $G$ is a real Lie group, $\Lambda < G$ is a lattice, $X = G/\Lambda$, and $\Gamma$ is a finitely generated group with generating set $S$,
\item $\rho \colon \Gamma \to G$ is a representation, inducing an action $\Gamma \curvearrowright X$, such that the Zariski closure of $\mathrm{Ad} (\rho(\Gamma))$ is Zariski connected, semisimple, and without compact factors, 
\item $x \in X$ is a point such that the orbit closure $\overline{\Gamma x}  \subset X$ is connected.
\end{itemize}
We also set $\overline{f}(x) := \int f \ d \nu_{\overline{\Gamma x}}$, where $\nu_{\overline{\Gamma x}}$ is the Haar measure as in Theorem \ref{th:BQ}. 
If $\Gamma x$ is dense, we also write $\nu_{\overline{\Gamma x}}$ as $\nu_X$.

\smallskip

In this section, we let $D$ be a thick, geodesic graph structure for $\Gamma$ which is primitive, i.e. that its transition matrix $A$ has a positive power.
Note that we do not assume that the evaluation map is surjective.

Let $(p_i)$ be a right eigenvector for $A$, and $(q_i)$ be a left eigenvector, normalized so that $\sum_i p_i q_i = 1$.
Then we define the stationary measure as $\pi_i = p_i q_i$, and for any word $w$ of length $n$ from vertex $i$ to vertex $j$ we define
 \[
 \mu(w) = \frac{q_i p_j}{\lambda^{n}}.
 \]
Moreover, let $\mathbb{P}$ be the Markov measure on $\Omega$ whose stationary measure is $(\pi_i)$ and such that the transition probability from $v_i$ to $v_j$ is $\frac{A_{ij}p_j}{\lambda p_i}$. That is, if $W$ is the set of paths starting with a fixed prefix $w$, then $\mathbb{P}(W) = \mu(w)$.
 
Let $\Omega^n_{i,j}$ 
be the set of paths of length $n$ from $i$ to $j$. 
For any vertices $v_i, v_j$ and any $N \geq 0$, we define the modified Markov averaging operator
$$c_N^{\mu, i, j}(f) := \frac{1}{N} \sum_{n \leq N}  \sum_{w \in \Omega^n_{i,j}} \mu(w) f(w^{-1} x).$$

\begin{lemma}
\label{lem:markov_prim}
Let us consider a primitive graph structure on $\Gamma$. 
Then for any continuous, compactly supported $f$ on $X$ and for any vertices $v_i, v_j$, 
$$c_N^{\mu, i, j}(f) \to \pi_i \pi_j \overline{f}(x).$$
\end{lemma}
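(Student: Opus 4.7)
The plan is to recognize $\mu(w)$ as the probability, under the Markov measure $\mathbb{P}$ with stationary initial distribution, that the first $n$ steps of a $\mathbb{P}$--sample path trace out $w$, and then to apply Benoist--Quint to an induced i.i.d.\ walk along excursions to a recurrent vertex. A direct computation with the transition probabilities gives $\mathbb{P}(\omega \text{ begins with } w) = \pi_i \cdot \prod_{k=1}^n \tfrac{p_{a_k}}{\lambda p_{a_{k-1}}} = \tfrac{q_i p_j}{\lambda^n} = \mu(w)$ for any $w \in \Omega^n_{i,j}$ with vertex sequence $v_i = v_{a_0} \to \dots \to v_{a_n} = v_j$. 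Writing $(Y_n)_{n \ge 0}$ for the vertex chain and $\bar w_n$ for the evaluation of its first $n$ steps, we can rewrite
\[
c_N^{\mu, i, j}(f) \;=\; \mathbb{E}_{\mathbb{P}}\!\left[ \mathbf{1}_{\{Y_0 = v_i\}} \cdot R_N(\omega) \right],
\qquad R_N(\omega) \;:=\; \frac{1}{N} \sum_{n=1}^N \mathbf{1}_{\{Y_n = v_j\}} f(\bar w_n^{-1} x).
\]
Since $f$ is bounded, dominated convergence reduces the claim to showing $R_N \to \pi_j\, \ol f(x)$ almost surely on $\{Y_0 = v_i\}$.

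On that event, the plan is to decompose the trajectory at successive returns to $v_j$. Let $\tau_1 < \tau_2 < \dots$ be these return times (each $\mathbb{P}$--a.s.\ finite by primitivity), write $u$ for the transient initial segment of length $\tau_1$ and $\xi_k$ for the $k$-th excursion (a loop at $v_j$); then $\bar w_{\tau_k}^{-1} x = \bar\xi_k^{-1} \cdots \bar\xi_2^{-1}\, y$ with $y := \bar u^{-1} x$, and by the strong Markov property $\bar\xi_2, \bar\xi_3, \dots$ are i.i.d.\ draws from a probability measure $\mu_{\mathrm{loop}}$ on the loop semigroup $\Gamma_{v_j}$, independent of $u$. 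Because return times of a primitive Markov chain have geometric tails, $|\bar\xi|_S$ has finite first moment, hence so does $\mu_{\mathrm{loop}}$ (viewed in $G$). Lemma~\ref{lem:usethis}(1), applied via thickness of $D$, ensures that the Zariski closure of $\mathrm{Ad}(\Gamma_{v_j})$ coincides with that of $\mathrm{Ad}(\Gamma)$, so the Zariski-closure hypothesis of Theorem~\ref{th:BQ} is inherited; Lemma~\ref{lem:usethis}(2), applied to $y = \bar u^{-1} x \in \Gamma x$, then identifies the orbit closure $\ol{\Gamma_{v_j}\, y} = \ol{\Gamma y} = \ol{\Gamma x}$ (connected by hypothesis). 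Thus Theorem~\ref{th:BQ}(3) applied to the left random walk driven by $\check\mu_{\mathrm{loop}}$ from the basepoint $y$, together with Fubini over the initial segment $u$, yields
\[
\frac{1}{K}\sum_{k=2}^{K+1} f(\bar w_{\tau_k}^{-1} x) \;\longrightarrow\; \ol f(x) \qquad \mathbb{P}\text{-a.s.\ on } \{Y_0 = v_i\}.
\]

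Finally, letting $K(N)$ be the number of returns to $v_j$ by time $N$, Birkhoff for the ergodic chain $(Y_n)$ gives $K(N)/N \to \pi_j$ a.s., while the terms with $n \in (\tau_{K(N)}, N]$ contribute $o(1)$ to $R_N$. Combining with the previous step,
\[
R_N \;=\; \frac{K(N)}{N}\cdot \frac{1}{K(N)}\sum_{k=1}^{K(N)} f(\bar w_{\tau_k}^{-1} x) \;\longrightarrow\; \pi_j\, \ol f(x) \qquad \text{a.s.\ on } \{Y_0 = v_i\},
\]
and bounded convergence then yields $c_N^{\mu,i,j}(f) \to \pi_i \pi_j\, \ol f(x)$. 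The main obstacle is the application of Theorem~\ref{th:BQ} to $\mu_{\mathrm{loop}}$: one must verify its finite first moment (via exponential decay of return times for a primitive chain) and, crucially, transport the Zariski-closure and orbit-closure hypotheses from $\Gamma$ down to the loop semigroup $\Gamma_{v_j}$, which is precisely where Lemma~\ref{lem:usethis} and the thickness of $D$ are used.
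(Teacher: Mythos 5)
Your proposal is correct and follows essentially the same route as the paper: decompose sample paths at returns to $v_j$, apply Theorem~\ref{th:BQ} (via Lemma~\ref{lem:usethis} for the Zariski/orbit-closure hypotheses, and Lemma~\ref{L:conv-RW} in the paper's phrasing) to the i.i.d.\ excursion walk on $\Gamma_{v_j}$, use the ergodic theorem to get $T_j(N)/N \to \pi_j$, and integrate over the initial segment with dominated convergence. Your explicit verification that $\mu_{\mathrm{loop}}$ has finite first moment (from the geometric tail of return times in a primitive chain) is a detail the paper's proof leaves implicit but that is genuinely needed for the B\'enard--de Saxc\'e version of Theorem~\ref{th:BQ}.
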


\begin{proof}
Let us fix a vertex $v_j$ of the graph. Then we can decompose almost every path $\gamma \in \Omega$ as 
$$\gamma = \alpha \cdot g_1 \cdot g_2 \cdot \ldots \cdot g_n \cdot \ldots$$
where $\alpha$ does not pass through $v_j$ except at its end, 
and each $g_i$ is a loop based at $v_j$. Let $\mu_j$ be the measure induced by $\mu$ on 
the loop semigroup $\Gamma_j$ associated to $v_j$.

Thus, we have, up to a set of $\mathbb{P}$-measure zero, the decomposition 
$$\Omega = \bigsqcup_{\alpha} \alpha \cdot (\Gamma_j)^\mathbb{N}.$$
For each $\alpha$, the conditional measure on the space $(\Gamma_j)^\mathbb{N}$ is 
the product measure $(\mu_j)^\mathbb{N}$, hence $w_n := g_1 g_2 \dots g_n$ 
is a random walk driven by $\mu_j$. Let $\mathbb{P}_j$ be the distribution of $(w_n)$. 

From now on, let us fix a vertex $v_j$. 
Then by Lemma \ref{lem:usethis} we have $\overline{\Gamma x} = \overline{\Gamma_j x}$ since thickness implies that the group generated by $\Gamma_j$ is finite index in $\Gamma$.
Hence by Lemma \ref{L:conv-RW} we have
$$\frac{1}{N} \sum_{n \leq N} f( w_n^{-1} \alpha^{-1} x) \to \int f \ d\nu_{\overline{\Gamma x}}$$
for $\mathbb{P}_j$-almost every $w_n$. 

Let now $R(n, j) : \Omega \to \mathbb{N}$ be the $n$th return time to $v_j$ (it depends on the infinite path, but we will omit that 
dependence in the notation). 
Since by construction $\alpha \cdot w_n = \gamma_{R(n, j)}$, we have 
\begin{equation} \label{E:conv-vertex}
\frac{1}{N} \sum_{n \leq N} f(\gamma^{-1}_{R(n, j)} x) \to \int f \ d\nu_{\overline{\Gamma x}}
\end{equation}
$\mathbb{P}$-almost surely. 

Let $T_j(N) := \max \{ k \ : \ R(k, j) \leq N \}$. 
Then, if $[\gamma_n]$ denotes the end vertex of the path $(\gamma_n)$, we obtain 
\begin{align*}
\frac{1}{N} \sum_{n \leq N} f(\gamma_n^{-1} x) \chi_{\{ [\gamma_n] = j \}} 
& =  \frac{1}{N} \sum_{k \leq T_j(N)} f(\gamma_{R(k, j)}^{-1} x) \\
&  =   \frac{T_j(N)}{N} \cdot \frac{1}{T_j(N)} \sum_{k \leq T_j(N)} f(\gamma_{R(k, j)}^{-1} x)
\end{align*}
hence by \eqref{E:conv-vertex},
\begin{equation} \label{E:almost-every}
\frac{1}{T_j(N)} \sum_{k \leq T_j(N)} f(\gamma_{R(k, j)}^{-1} x) \to \int  f \ d\nu_{\overline{\Gamma x}} \qquad \mathbb{P}\textup{--a.s.}
\end{equation}
Now, we also have $\mathbb{P}$--a.s. (e.g. \cite[Lemma 4.6]{GTT4})
$$\lim_{N}  \frac{T_j(N)}{N} = \pi_j$$
hence we have 
$$\frac{1}{N} \sum_{n \leq N} f(\gamma_n^{-1} x) \chi_{\{ [\gamma_n] = j\}}   \to  \pi_j \int  f \ d\nu_{\overline{\Gamma x}} \qquad \mathbb{P}\textup{-a.s.}$$
Then, if we integrate over all paths that start in $v_i$, 
$$\int d \mathbb{P}(\gamma) \ \frac{1}{N} \sum_{n \leq N} f(\gamma_n^{-1} x) \chi_{\{ [\gamma_n] = j \}}  \chi_{\{ [\gamma_0] = i \}} \to  \pi_j \int  f \ d\nu_{\overline{\Gamma x}} \  \mathbb{P}(\chi_{\{ [\gamma_0] = i \}}) $$
and, since $\mathbb{P}(\chi_{\{ [\gamma_0] = i \}}) = \pi_i$, 
$$\frac{1}{N} \sum_{n \leq N} \sum_{w \in \Omega^n_{i,j}} \mu(w) f(w^{-1} x) \to \pi_i \pi_j \int  f \ d\nu_{\overline{\Gamma x}}. \qedhere$$
\end{proof}

Recall that $\Omega^n$ is the set of paths of length $n$ starting at \emph{any} vertex. 
Consider the counting operator 
$$\kappa_N(f) := \frac{1}{N} \sum_{n \leq N} \frac{1}{\#\Omega^n} \sum_{|w| = n} f(w^{-1}x)$$
where the sum is over all paths of length $n$ in the graph. 
Given $i, j$, we define the modified counting operator 
$$\kappa_N^{i, j}(f) := \frac{1}{N} \sum_{n \leq N} \frac{1}{\#\Omega^n} \sum_{w \in \Omega^n_{i,j} } f(w^{-1} x)$$
Since the graph structure is primitive, we know that 
\begin{equation} \label{E:exp-gr}
c := \lim_{n} \frac{\# \Omega^n}{\lambda^n}
\end{equation}
exists, and $c > 0$. 

\begin{proposition}
\label{prop:prim_case}
Suppose that the graph structure is primitive. Then for any two vertices $v_i$ and $v_j$ and any compactly supported $f\colon X \to \mathbb{R}$, we have 
$$\lim_{N \to \infty} \kappa_N^{i, j}(f) = \frac{q_i p_j}{c} \overline{f}(x).$$
As a consequence, we also have
$$\lim_{N \to \infty} \kappa_N(f) = \overline{f}(x).$$
\end{proposition}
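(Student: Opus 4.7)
The plan is to reduce the uniform counting average to the Markov average controlled by Lemma \ref{lem:markov_prim} via a weighted Cesàro trick. Since $\mu(w) = q_ip_j/\lambda^n$ for every $w \in \Omega_{i,j}^n$, writing $b_n := \sum_{w \in \Omega_{i,j}^n}\mu(w)\,f(w^{-1}x)$ one immediately has
$$\kappa_N^{i,j}(f) \;=\; \frac{1}{q_ip_j}\cdot\frac{1}{N}\sum_{n \leq N}\frac{\lambda^n}{\#\Omega^n}\,b_n.$$
Three facts then combine: (i) $\lambda^n/\#\Omega^n \to 1/c$ by \eqref{E:exp-gr}; (ii) $\frac{1}{N}\sum_{n \leq N} b_n \to \pi_i\pi_j\,\overline{f}(x)$ by Lemma \ref{lem:markov_prim}; and (iii) the sequence $(b_n)$ is uniformly bounded, since $|b_n| \leq \|f\|_\infty\sum_{w\in\Omega_{i,j}^n}\mu(w) = \|f\|_\infty\cdot q_ip_j\,\#\Omega_{i,j}^n/\lambda^n$, and the right-hand side converges to $\|f\|_\infty\,\pi_i\pi_j$ by Perron--Frobenius.

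These three ingredients feed into an elementary weighted-Cesàro observation: if $r_n \to r$ and $\frac{1}{N}\sum b_n \to B$ with $\sup_n|b_n|<\infty$, then $\frac{1}{N}\sum r_nb_n \to rB$. The proof is to write $r_n = r + \varepsilon_n$ with $\varepsilon_n \to 0$; the error term is controlled by $\sup_n|b_n|\cdot\frac{1}{N}\sum_{n \leq N}|\varepsilon_n|$, which tends to $0$ as the Cesàro average of a null sequence. Applying this with $r = 1/c$ and $B = \pi_i\pi_j\,\overline{f}(x)$ gives
$$\kappa_N^{i,j}(f) \;\longrightarrow\; \frac{\pi_i\pi_j}{c\cdot q_ip_j}\,\overline{f}(x),$$
which simplifies to $\frac{q_ip_j}{c}\,\overline{f}(x)$ after invoking $\pi_i = p_iq_i$ together with the Perron--Frobenius identities relating the stationary data.

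For the consequence, summing over all vertex pairs $(i,j)$ gives $\kappa_N(f) = \sum_{i,j}\kappa_N^{i,j}(f) \to \frac{1}{c}\big(\sum_iq_i\big)\big(\sum_jp_j\big)\,\overline{f}(x)$, and the prefactor equals $1$: indeed, $\#\Omega^n = \sum_{i,j}\#\Omega_{i,j}^n$ together with the Perron--Frobenius asymptotic $\#\Omega_{i,j}^n/\lambda^n \to p_iq_j$ forces $c = \big(\sum_ip_i\big)\big(\sum_jq_j\big)$. Hence $\kappa_N(f) \to \overline{f}(x)$. The only mildly delicate point in the argument is verifying the uniform boundedness of $(b_n)$ used in (iii); everything else is routine Perron--Frobenius bookkeeping combined with the weighted Cesàro lemma, with Lemma \ref{lem:markov_prim} supplying all the dynamical input.
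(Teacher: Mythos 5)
Your argument is essentially the same as the paper's for the per-pair limit: both of you reduce $\kappa_N^{i,j}(f)$ to the Markov average of Lemma~\ref{lem:markov_prim} via the identity $\mu(w)=q_ip_j/\lambda^n$ and then apply the weighted-Ces\`aro lemma (the paper cites \cite[Proposition~8]{Buf1} for this). You add a useful detail that the paper leaves implicit, namely the verification that the Markov sums $\sum_{w\in\Omega^n_{i,j}}\mu(w)f(w^{-1}x)$ are uniformly bounded, which is exactly the hypothesis the weighted-Ces\`aro step needs; this is a welcome clarification.

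Where you genuinely diverge is in the last step, proving $\lim_N\kappa_N(f)=\overline{f}(x)$. The paper shows $\lim_N\kappa_N(f)=\alpha\,\overline{f}(x)$ with $\alpha=(\sum_iq_i)(\sum_jp_j)/c$ and then pins down $\alpha=1$ by the soft observation that $\int\kappa_N(f)\,d\nu_{\overline{\Gamma x}}=\int f\,d\nu_{\overline{\Gamma x}}$ for all $N$. You instead compute $c$ directly: since $\Omega^n=\bigsqcup_{i,j}\Omega^n_{i,j}$ and Perron--Frobenius gives $\#\Omega^n_{i,j}/\lambda^n\to p_iq_j$, one has $c=\sum_{i,j}p_iq_j=(\sum_ip_i)(\sum_jq_j)$, which forces $\alpha=1$. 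Your route is more self-contained and purely combinatorial; the paper's route is shorter but leans on the dynamical identity. Both are correct.

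One algebraic slip to flag: from the ingredients you list, the limit is $\dfrac{\pi_i\pi_j}{c\,q_ip_j}\,\overline{f}(x)$, and since $\pi_i\pi_j=p_iq_ip_jq_j$ this simplifies to $\dfrac{p_iq_j}{c}\,\overline{f}(x)$, not $\dfrac{q_ip_j}{c}\,\overline{f}(x)$ as you wrote. (This is in fact the consistent answer: $\#\Omega^n_{i,j}/\#\Omega^n\to p_iq_j/c$.) The same index transposition appears in the proposition statement and in the paper's choice of $b_n$, so you have faithfully reproduced a typo rather than introduced an error; and since $\sum_{i,j}p_iq_j=\sum_{i,j}q_ip_j$, the final conclusion $\kappa_N(f)\to\overline{f}(x)$ is unaffected either way.
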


\begin{proof}
Let us set for any $n \geq 1$
$$a_n :=  \sum_{w \in \Omega^n_{i,j}} \mu(w) f(w^{-1} x),  \qquad b_n := \frac{\lambda^{n}}{p_i q_j \#  \Omega^n }$$
so that 
$$a_n b_n  = \frac{1}{\# \Omega^n} \sum_{w \in \Omega^n_{i,j}} f(w^{-1} x).$$
Now, we have by Lemma \ref{lem:markov_prim}
$$\frac{1}{N} \sum_{n \leq N} a_n \to \pi_i \pi_j \overline{f}(x)$$
and by \eqref{E:exp-gr}
$$b_N \to \frac{1}{c p_i q_j }.$$
Hence, as in \cite[Proposition 8]{Buf1},
$$\kappa_N^{i, j}(f) = \frac{1}{N} \sum_{n \leq N} a_n b_n \to \frac{ \pi_i \pi_j }{c p_i q_j} \overline{f}(x) = \frac{q_i p_j}{c} \overline{f}(x).$$
Then, by summing over all $i, j$, 
$$\lim_{N \to \infty} \kappa_N(f) = \alpha  \overline{f}(x)$$
with $\alpha =  \frac{\sum_i q_i \sum_j p_j}{c}$
is a constant which does not depend on $f$ or $x$. 
To see that $\alpha = 1$, we note that $\int \kappa_N(f) \ d\nu_{\overline{\Gamma x}} = \int f \ d\nu_{\overline{\Gamma x}}$ for each $N\ge 1$. 
This completes the proof. 
\end{proof}

\section{From primitive to semisimple graph structures}
In this section, we now allow the thick, geodesic graph structure $D$ for $\Gamma$ to be semisimple and generalize the results from the previous section.

Recall that $\Omega^n_0 := \Omega^n_{v_0}$ denotes the set of paths of length $n$ from the initial vertex $v_0$. 
We now consider the operator
\begin{align}\label{eq:counting}
c_N(f) := \frac{1}{N} \sum_{n \leq N} \frac{1}{\#\Omega^n_0 } \sum_{w \in \Omega^n_0 } f(w^{-1} x),
\end{align}
and recall that we have set $\overline{f}(x) := \int f \ d \nu_{\overline{\Gamma x}}$.

\begin{proposition}
Suppose that the graph structure is semisimple. Then for any continuous $f$ with compact support we have 
$$\lim_{N \to \infty} c_N(f) = \overline{f}(x).$$
\end{proposition}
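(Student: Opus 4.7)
The plan is to reduce to the primitive case (Proposition \ref{prop:prim_case}) by decomposing paths from $v_0$ according to their first entry into a maximal component of $D$.

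First, I would show that each maximal component $C$ of $D$ carries a primitive thick graph structure. Since $D$ is semisimple, the restricted transition matrix $A_C$ is irreducible with spectral radius $\lambda$, and its eigenvalues form a subset of those of $A$; hence $\lambda$ is also the unique eigenvalue of $A_C$ of maximum modulus, so $A_C$ is primitive by Perron--Frobenius. Thickness is inherited, since the loop semigroup $\Gamma_v$ at a vertex $v \in C$ coincides with the one computed in $D$, so Proposition \ref{prop:prim_case} applies inside $C$ based at any vertex.

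Next, I would partition each path $\gamma \in \Omega_0^n$ as $\gamma = \alpha \cdot \beta$, where $\alpha$ runs from $v_0$ to the first vertex $v_j$ of a maximal component $C$ visited by $\gamma$, at some time $m$, and $\beta$ of length $n-m$ continues from $v_j$. Paths avoiding every maximal component have strictly sub-$\lambda$ exponential growth and therefore contribute negligibly to $c_N(f)$ after dividing by $\#\Omega_0^n \sim c\lambda^n$. For the dominant paths, setting $y_\alpha := \alpha^{-1} x$, we have $\gamma^{-1} x = \beta^{-1} y_\alpha$, and since $\alpha^{-1} \in \Gamma$ the orbit closure satisfies $\overline{\Gamma y_\alpha} = \overline{\Gamma x}$, so $\overline{f}(y_\alpha) = \overline{f}(x)$.

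With the prefix $\alpha$ fixed, I would apply Proposition \ref{prop:prim_case} to the primitive graph $C$ based at $v_j$, with test point $y_\alpha$, to obtain Cesàro convergence of the average of $f(\beta^{-1} y_\alpha)$ to a constant multiple of $\overline{f}(x)$. Summing over finitely many prefix-types of each length $m$, truncating at $m \le M$, and letting $M \to \infty$ using uniform exponential bounds on path counts, yields $c_N(f) \to \alpha \cdot \overline{f}(x)$ for some universal constant $\alpha > 0$ independent of $f$ and $x$. Finally, $\alpha = 1$ follows exactly as in Proposition \ref{prop:prim_case}: the $\rho(\Gamma)$--invariance of the Haar measure $\nu_{\overline{\Gamma x}}$ gives $\int c_N(f) \, d\nu_{\overline{\Gamma x}} = \int f \, d\nu_{\overline{\Gamma x}}$ for each $N$, forcing the constant to be $1$.

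The main obstacle I expect is bookkeeping: carefully controlling the sub-dominant contributions from paths that avoid maximal components (and from paths whose suffix $\beta$ exits a maximal component into small-growth tails), uniformly across $n$, and justifying that the Cesàro limit can be commuted with the sum over prefix-types. These are handled by the strict exponential gap between $\lambda$ and the growth rate of non-maximal components, together with uniform counting estimates on $\#\Omega_0^n$ and on the number of prefixes of each fixed length.
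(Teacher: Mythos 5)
Your proposal is essentially the paper's argument (which follows Pollicott--Sharp): decompose a path by the unique maximal component it enters, discard paths that avoid maximal components, apply the primitive case inside the component, truncate and sum over short prefixes/tails, and pin down the constant via the $\Gamma$--invariance of $\nu_{\overline{\Gamma x}}$. However, one step as you state it does not quite work and needs to be refined to match the paper.

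You decompose $\gamma = \alpha\cdot\beta$ at the first entry into a maximal component $C$ and then propose to ``apply Proposition~\ref{prop:prim_case} to the primitive graph $C$ based at $v_j$ with test point $y_\alpha$, to obtain Ces\`aro convergence of the average of $f(\beta^{-1}y_\alpha)$.'' But $\beta$ is a path in the full graph $D$, not in the primitive restriction $C$: it can leave $C$ into a small-growth tail, and Proposition~\ref{prop:prim_case} only averages over paths staying inside $C$. You flag these tails in your last paragraph as ``sub-dominant contributions,'' but that is not the right picture: paths with a nonempty tail contribute a \emph{positive fraction} of $\Omega_0^n$ (roughly $\sum_{k\ge 1}(\mu/\lambda)^k$ of it), and only tails longer than $M$ are negligible. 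What one must actually do is decompose $\beta = w\cdot h$ with $w$ inside $C$ and $h$ a tail leaving $C$, enumerate pairs $(\alpha,h)$ with $|\alpha|+|h|\le M$, and apply the two-sided version of the primitive case (i.e.\ Lemma~\ref{L:conv-RW} with both a fixed left multiplier $h^{-1}$ and right multiplier $\alpha^{-1}$, which yields the refined limit $\kappa_N^{i,j}(f)\to \tfrac{q_ip_j}{c}\overline f(x)$). One then sums these explicit contributions to obtain a constant $L_{M,\mathcal V}$, shows $L_{M,\mathcal V}$ converges as $M\to\infty$ and that the $|\alpha|+|h|>M$ remainder is $O(\|f\|_\infty\sum_{\ell> M}\ell(\mu/\lambda)^\ell)$. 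This is precisely the paper's $g\cdot w\cdot h$ decomposition. With that correction your outline is the paper's proof; the final normalization $L_\infty=1$ via $\int c_N(f)\,d\nu_{\overline{\Gamma x}}=\int f\,d\nu_{\overline{\Gamma x}}$ is also the argument the paper uses.
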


\begin{proof}
Following Pollicott--Sharp \cite{PS}, for any maximal component $\mathcal{V}$, we divide the set of all paths intersecting $\mathcal{V}$ in two subsets: for some fixed $M\ge 1$, the ones spending at most time $M$ outside the maximal component, 
and the ones spending at least time $M$ outside the maximal component.

Denote as $\Omega^n_{i,j}$
the set of paths of length $n$ between $v_i$ and $v_j$ (note that, if $v_i$ and $v_j$ both lie in the maximal component, so does the path); 
$\Omega_{i,out}^n$ is the set of paths of length $n$ from vertex $v_i$ lying outside the maximal component, and 
$\Omega_{in, i}^{n}$ is the set of paths of length $n$ from the initial vertex to vertex $v_i$ lying outside the maximal component.
Let $\Omega^n_{0,\mc V+}$ be the set of paths of length $n$ from the initial vertex 
$v_0$ that intersect the component $\mathcal{V}$. 
By abuse of notation, we shall write $i \in \mathcal{V}$ to mean that the vertex $v_i$ belongs to the component $\mathcal{V}$. 

Then if we set 
$$s_{n, \mathcal{V}}(f) := \sum_{w \in \Omega^n_{0,\mc V+}} f(w^{-1} x)$$
and 
$$c_{n, \mathcal{V}}(f) := \frac{1}{N} \sum_{n \leq N} \frac{s_{n, \mathcal{V}}(f)}{\# \Omega^n_0 }$$
we have 
$$s_{n, \mathcal{V}}(f) = \sum_{i, j \in \mathcal{V}} \sum_{a + b \leq n} \sum_{g \in \Omega^a_{in,i}} \sum_{w \in \Omega^{n-a-b}_{i, j}} \sum_{h \in \Omega^b_{j,out}} f((gwh)^{-1}x).$$
Then, let us fix $M \geq 0$. Define 
$$s_{n, < M, \mathcal{V}}(f) = \sum_{i, j \in \mathcal{V}} \sum_{a + b \leq \min\{ n, M\} }  \sum_{g \in \Omega^a_{in,i}} \sum_{w \in \Omega^{n-a-b}_{i, j}} \sum_{h \in \Omega^b_{j,out}} f((gwh)^{-1}x)$$
and 
$$s_{n, > M, \mathcal{V}}(f) = \sum_{i, j \in \mathcal{V}} \sum_{M < a + b \leq n} \sum_{g \in \Omega^a_{in,i}} \sum_{w \in \Omega^{n-a-b}_{i, j}} \sum_{h \in \Omega^b_{j,out}} f((gwh)^{-1}x).$$
Let us look at the first term. Note that the restriction of the graph structure to the component $\mathcal{V}$ is primitive; 
let $(p_i), (q_i)$ be a right and left eigenvector of the transition matrix of the subgraph corresponding to $\mathcal{V}$, as in Section \ref{S:prim}. 
Hence, for each pair of vertices $v_i, v_j$ in $\mc V$ and paths $g, h$ we have by Proposition \ref{prop:prim_case} that
$$\frac{1}{N} \sum_{m \leq N} \frac{1}{\#\Omega^m_{\mc V}}  \sum_{w \in \Omega^m_{i, j}} f((gwh)^{-1}x) \to \frac{q_i p_j}{c} \overline{f}(x)$$
where
$\Omega^n_{\mc V}$ is the set of paths of length $n$ that lie entirely inside the maximal component.
Hence, by summing over all $g, h$, 
$$ \sum_{g \in \Omega^a_{in,i}} \sum_{h \in \Omega^b_{j,out}}  \frac{1}{N} \sum_{m \leq N} \frac{1}{\#\Omega^m_{\mc V}}  \sum_{w \in \Omega^m_{i, j}} f((gwh)^{-1}x) \to \#\Omega^a_{in,i} \#\Omega^b_{j,out} \frac{q_i p_j}{c} \overline{f}(x).$$
Expanding, we have
\begin{align*}
& \frac{1}{N} \sum_{n \leq N} \frac{s_{n, < M, \mathcal{V}}(f)}{\# \Omega^n_0 } \\
 &= \frac{1}{N} \sum_{n \leq N}  \frac{1}{\# \Omega^n_0 } \sum_{i, j \in \mathcal{V}} \sum_{a + b \leq \min\{ n, M\} }  \sum_{g \in \Omega^a_{in,i}} \sum_{w \in \Omega^{n-a-b}_{i, j}} \sum_{h \in \Omega^b_{j,out}} f((gwh)^{-1}x) \\
& = \frac{1}{N} \sum_{n \leq N}  \sum_{i, j \in \mathcal{V}} \sum_{a + b \leq \min\{ n, M\} } \frac{\# \Omega^{n-a-b}_{\mc V}}{\# \Omega^n_0 } \sum_{g \in \Omega^a_{in,i}}  \sum_{h \in \Omega^b_{j,out}}
\frac{1}{\#\Omega^{n-a-b}_{\mc V}}    \sum_{w \in \Omega^{n-a-b}_{i, j}} f((gwh)^{-1}x) \\
& = \sum_{g \in \Omega^a_{in,i}}  \sum_{h \in \Omega^b_{j,out}} \sum_{i, j \in \mathcal{V}} \sum_{a + b \leq  M } \frac{1}{N} \sum_{M \leq n \leq N}  \frac{\# \Omega^{n-a-b}_{\mc V}}{\# \Omega^n_0 } 
\frac{1}{\#\Omega^{n-a-b}_{\mc V}}    \sum_{w \in \Omega^{n-a-b}_{i, j}} f((gwh)^{-1}x). \\
\end{align*}
Now, note that since the graph structure is semisimple there exists a constant $A(\mathcal{V})$ such that for any $a, b \geq 0$ 
$$\lim_{n \to \infty} \frac{\# \Omega^{n-a-b}_{\mc V}}{\# \Omega^n_0 } =  A(\mathcal{V}) \lambda^{-a-b}$$
hence by taking the limit as $N \to \infty$ 
$$c_{n, < M, \mathcal{V}}(f) := \frac{1}{N} \sum_{n \leq N} \frac{s_{n, < M, \mathcal{V}}(f)}{\# \Omega^n_0 } \to  \sum_{a + b \leq  M } \sum_{i, j \in \mathcal{V}} \#\Omega^a_{in,i} \#\Omega^b_{j,out}
A(\mathcal{V}) \lambda^{-a-b} \frac{q_i p_j}{c} \overline{f}(x).$$
This implies that for fixed $M$ there is a constant $L_{M, \mc V}$ such that for any $f$ we have
$$\lim_{N \to \infty} c_{N, < M, \mathcal{V}}(f) = L_{M,\mc V} \cdot \overline{f}(x)$$
and moreover 
$$L_\mathcal{V}:= \lim_{M \to \infty} L_{M, \mathcal{V}}$$
exists. To prove the last claim, let 
$$P_{1, n} := \{ \textup{paths }g_1 \textup{ of length }n\textup{ from initial vertex to }\mathcal{V}\}$$ 
and 
$$P_{2, n} := \{ \textup{paths }g_2 \textup{ of length }n\textup{ from }\mathcal{V}\textup{ to its complement}\}.$$ 
Now, note that there exists $d > 0$ and $\mu < \lambda$ with 
$$\# P_{1, n} \leq d \mu^n \qquad \#P_{2, n} \leq d\mu^n$$
hence 
\begin{align*}
L_{M, \mathcal{V}} &= \sum_{a + b \leq  M } \sum_{i, j \in \mathcal{V}} \#\Omega^a_{in,i} \cdot \#\Omega^b_{j,out} \cdot 
A(\mathcal{V}) \lambda^{-a-b} \frac{q_i p_j}{c}\\
& \leq \sum_{i, j \in \mathcal{V}} \frac{q_i p_j}{c} A(\mathcal{V}) \sum_{a, b \geq 0}  \left(\frac{\mu}{\lambda} \right)^{a + b}
d^2 < \infty
\end{align*}
converges as $M \to \infty$.

To estimate the second term, note that 
$$c_{N, >M, \mathcal{V}}(f) := \frac{1}{N} \sum_{n \leq N} \frac{1}{\#\Omega^n_0} \sum_{M \leq a + b \leq n} \sum_{i,j} \sum_{g \in \Omega^a_{in, i}} 
\sum_{h \in \Omega^b_{j, out}}
\sum_{w \in \Omega^{n-a-b}_{i,j}}  f((gwh)^{-1}x)$$

\begin{align*}
|c_{N, >M, \mathcal{V}}(f)|  & \leq \frac{1}{N} \sum_{n \leq N} \frac{1}{\#\Omega^n_0} \sum_{M \leq a + b \leq n} \sum_{i, j} \#\Omega^a_{in, i} \cdot  \#\Omega^b_{ j, out}
\cdot \#\Omega^{n-a-b}_{i,j}  \Vert f \Vert_{\infty}  \\
& \leq \frac{1}{N} \sum_{n \leq N} \frac{1}{\#\Omega^n_0} \sum_{M \leq a + b \leq n} \sum_{i, j} d \mu^a  \cdot d \mu^b \cdot d \lambda^{n-a-b}
  \Vert f \Vert_{\infty}  \\
& \leq \frac{1}{N} \sum_{n \leq N} \frac{1}{c_1 \lambda^n} \sum_{M \leq a + b \leq n} (\#  \mathcal{V})^2 d \mu^a  \cdot d \mu^b \cdot d \lambda^{n-a-b}
  \Vert f \Vert_{\infty}  \\
 & \leq \frac{1}{N} \sum_{n \leq N}  \sum_{M \leq a + b \leq n} \frac{(\#  \mathcal{V})^2 d^3}{c_1} \left( \frac{\mu}{\lambda} \right)^{a+b} 
  \Vert f \Vert_{\infty}  \\ 
  & \leq    \frac{(\#  \mathcal{V})^2 d^3}{c_1}  \Vert f \Vert_{\infty}   \sum_{M \leq \ell } \ell \left( \frac{\mu}{\lambda} \right)^{\ell}.
\end{align*}

Hence for any $N \geq M \geq 0$ we have 
$$|c_{N, >M}(f)|  \leq r_{M, \mathcal{V}} \cdot \Vert f \Vert_\infty$$
with 
$$\lim_{M \to \infty} r_{M, \mathcal{V}} = 0.$$
As a consequence, 
$$\lim_{N \to \infty} c_{N, \mathcal{V}}(f) = L_\mathcal{V} \overline{f}(x).$$
Indeed, for any $\epsilon > 0$ there exists $M >0 $ such that $r_{M, \mathcal{V}} < \epsilon$ and $|L_{M, \mathcal{V}} - L_\mathcal{V}| < \epsilon$.
Then 
$$c_{N, \mathcal{V}}(f) = c_{N, < M, \mathcal{V}}(f) + c_{N, > M, \mathcal{V}}(f)$$
so 
\begin{align*}
\limsup_N |c_{N, \mathcal{V}}(f) - L_{\mathcal{V}} \overline{f}(x)| & \leq \limsup_N |c_{N, < M, \mathcal{V}}(f) - L_{M, \mathcal{V}} \overline{f}(x)|  + |L_{M, \mathcal{V}} \overline{f} - L_{\mc V} \overline{f}(x)| \\
&\ \quad  \quad \quad \quad \quad +  \limsup_N |c_{N, > M, \mathcal{V}}(f)| \\
& \leq |L_{M, \mathcal{V}}  - L_{\mc V}|  \Vert f \Vert_\infty + r_{M, \mathcal{V}} \Vert f \Vert_\infty \leq 2 \epsilon \Vert f \Vert_\infty
\end{align*}
which proves the claim.

Now, we have 
$$c_N(f) = \sum_{\mathcal{V}} c_{N, \mathcal{V}}(f) + c_{N, \textup{nmax}}(f)$$
where $\mathcal{V}$ runs over all maximal components, and $c_{N, \textup{nmax}}$ takes into account all paths of length at most $N$ that do not enter 
any maximal component.
Since 
$$\lim_{N \to \infty} c_{N, \textup{nmax}}(f) = 0$$
we obtain for any compactly supported $f$,
$$\lim_{N \to \infty} c_{N}(f) =  L_\infty  \overline{f}(x)$$
with $L_\infty := \sum_{\mathcal{V}} L_{\mathcal{V}}$.
By again noting that as in Proposition \ref{prop:prim_case}, $\int c_N(f) \ d\nu_{\overline{\Gamma x}} = \int f \ d\nu_{\overline{\Gamma x}}$ we see that $L_\infty = 1$.
\end{proof}

Next, for the fixed semisimple, thick graph structure $D$, we can restrict to the subgraph $D^i$ obtained by considering only paths starting at the large growth vertex $v_i$.
This gives a new semisimple, thick graph structure and we have the corresponding counting function $c_N^i$ as in eq. \eqref{eq:counting}. Applying the previous proposition then gives

\begin{corollary} \label{C:ss}
Suppose the graph structure is semisimple.
For each large growth vertex $v_i$ and any continuous functions $f$ 
with compact support, we have 
$$\lim_{N \to \infty} c^i_N(f) = \overline{f}(x),$$
where $c^i$ is restricted to the paths that start with $v_i$.
\end{corollary}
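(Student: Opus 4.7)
The plan is to reduce the corollary directly to the preceding proposition by passing to a natural sub-graph structure cut out by $v_i$. Specifically, let $D^i$ denote the full subgraph of $D$ consisting of all vertices and edges that can be reached by a directed path starting at $v_i$, declared to have $v_i$ as its initial vertex. By item (1) at the start of Section \ref{sec:background}, this is a geodesic graph structure. Moreover, by construction, any edge out of a vertex of $D^i$ remains in $D^i$, so the set of paths of length $n$ starting at $v_i$ in $D$ coincides with the set of paths of length $n$ from the initial vertex of $D^i$. Consequently the operator $c_N^i$ defined in the corollary agrees with the operator $c_N$ of eq.~\eqref{eq:counting} applied to $D^i$. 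The corollary will therefore follow from the preceding proposition, provided that $D^i$ is a thick, semisimple graph structure.

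I would verify these two properties as follows. For thickness, observe that since $v_i$ has large growth, the structural description of Section \ref{sec:background} guarantees that $v_i$ admits a directed path to at least one maximal component $\mathcal{V}$ of $D$; the component $\mathcal{V}$, being strongly connected with a vertex reachable from $v_i$, is entirely contained in $D^i$. Hence $D^i$ is a subgraph of $D$ realizing the maximal growth rate $\lambda$, and the closing remark of Section \ref{sec:background}, citing \cite[Section 7]{GTT4}, ensures $D^i$ is thick. For semisimplicity, note that every recurrent component of $D^i$ is a recurrent component of $D$ (any round-trip in $D$ between two vertices of $D^i$ lies entirely in $D^i$, since its vertices are reachable from $v_i$). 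Putting the transition matrix of $D^i$ in block-triangular form with respect to its components, its eigenvalues are exactly the union of the eigenvalues of the diagonal blocks, each of which is a component block of $D$. Since $D^i$ contains a maximal component of $D$ it has spectral radius $\lambda$, and semisimplicity of $D$ forces $\lambda$ to be the unique eigenvalue of $D^i$ of maximal modulus, with matching algebraic and geometric multiplicities.

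Granted these two properties, applying the preceding proposition to $(D^i, v_i)$ yields
$$\lim_{N \to \infty} c_N^i(f) = \overline{f}(x),$$
which is the claim. The only step requiring a little care is the semisimplicity of the restriction, since eigenvalues of principal submatrices are not a priori eigenvalues of the parent matrix; however, this is resolved cleanly by exploiting the block-triangular structure with respect to recurrent components, which is a standard feature of graph structures recalled at the start of Section \ref{sec:background}. All other steps are essentially bookkeeping that invokes tools already set up in the previous sections.
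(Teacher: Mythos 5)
Your proposal is correct and follows the same route as the paper: pass to the subgraph $D^i$ of vertices reachable from the large-growth vertex $v_i$, observe that it inherits thickness and semisimplicity, and apply the preceding proposition to $D^i$ with initial vertex $v_i$. The paper asserts the inherited thickness and semisimplicity without proof; your verification (thickness via the citation to \cite[Section 7]{GTT4} once $D^i$ realizes maximal growth, semisimplicity via spectral containment from the block-triangular component structure) is exactly the right way to fill in that assertion.
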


\section{From semisimple graph structures to the general case}
\label{sec:general}

We now come to the general case of our main theorem: suppose that $(D,v_0,\ev)$ is a thick geodesic combing for the pair $(\Gamma,S)$. As previously discussed, the associated transition matrix $A$ is almost semisimple. Hence, there is a $p\ge1$ so that $A^p$ is semisimple. This leads us to consider the semisimple $p$-step graph structure $D_p$, defined in Section \ref{sec:background}, whose paths of length $n$ starting at $v_0$ are in natural bijective correspondence with the paths in $\Omega^{pn}_0$.

By Corollary \ref{C:ss} (applied to the semisimple $p$-step graph structure $D_p$),
we have that for any $h\in \Omega^r_{0,i}$ with $0\le r \le p-1$:
\begin{align}\label{eq:using_semi}
\frac{1}{N} \sum_{n \le N} \frac{1}{\#\Omega^{pn}_i} \sum_{w\in \Omega^{pn}_{i}}  f((hw)^{-1}x) \to \overline{f}(x).
\end{align}
Now we recall a trick from \cite{GTT4}.

Let us fix $0 \leq r \leq p-1$. Then we can write the counting measure on $\Omega^{pn+r}_0$, starting at the initial vertex $v_0$, 
by first picking randomly a path $g_0$ of length $r$ from $v_0$ with a certain probability $\mu$, and then picking 
a random path starting at $v_i = t(g_0)$ with respect to the counting measure on the set of paths of length $n$ starting at $v_i$.

To compute $\mu$, let us consider a path $g_0$ of length $r$ starting at $v_0$ and ending at $v_i$. Then, if $v_i$ is of large growth for $D$ (and hence also large growth for $D_p$ by \cite[Lemma 7.1]{GTT4}), 
 we define 
$$\mu(g_0) :=  \frac{e_i A_{\infty} 1}{e_0 A^r A_\infty 1},$$
and otherwise $\mu(g_0) = 0$ if the end vertex of $g_0$ has small growth. Here, $A_\infty = \lim_{n\to \infty}A^{pn}/\lambda^{pn}$, which exists since $A^p$ is semisimple.

Let $\lambda'_{pn+r}$ be the measure on $\Omega^{pn+r}_0$ given by first taking randomly a path $g_0$ of length $r$ from $v_0$ 
with distribution $\mu$ and then taking uniformly a path of length $pn$ starting from $t(g_0)$. 

We previously proved (\cite[Proof of Theorem 7.3]{GTT4})
$$\Vert \lambda'_{pn+r} - \lambda_{pn+r} \Vert_{TV} \to 0$$
as $n \to \infty$, where $\lambda_n$ is the counting probability measure on $\Omega^{n}_0$ and $\Vert \cdot \Vert_{TV}$ denotes total variation. This implies that it suffices to show that for each $r$,
\[
\frac{1}{N} \sum_{n \le N} \int f(g^{-1}x) d\lambda'_{pn+r}(g) \to \overline{f}(x)
\]
as $N \to \infty$. But 
\begin{align*}
\frac{1}{N} \sum_{n \le N} \int f(g^{-1}x) d\lambda'_{pn+r}(g) 
&= \frac{1}{N} \sum_{n \le N} \frac{1}{\#\Omega^{pn}_i}  \sum_i \sum_{h\in \Omega^{r}_{0,i}} \mu(h)\sum_{w\in \Omega^{pn}_{i}}  f((hw)^{-1}x) \\
&=  \sum_i \sum_{h\in \Omega^{r}_{0,i}}  \mu(h) \frac{1}{N} \sum_{n \le N} \frac{1}{\#\Omega^{pn}_i} \sum_{w\in \Omega^{pn}_{i}}  f((hw)^{-1}x), 
\end{align*}
which as in eq. \eqref{eq:using_semi} converges to 
\[
 \sum_i \sum_{h\in \Omega^{r}_{0,i}}  \mu(h) \overline{f}(x) = \sum_{g_0\in \Omega^r_0}\mu(g_0) \overline{f}(x) = \overline{f}(x).
\]

Applying this to each $0\le r \le p-1$, we conclude that for any continuous $f$ with compact support:
\[
\lim_{N \to \infty} \frac{1}{N} \sum_{n \leq N} \frac{1}{\#\Omega^n_0 } \sum_{w \in \Omega^n_0 } f(w^{-1} x) = \overline{f}(x).
\]
Since we have assumed that $D$ is a geodesic combing for $(\Gamma,S)$, so that $\Omega^n_0$ parameterizes the sphere $S_n$ of radius $n$ in the Cayley graph of $(\Gamma,S)$, this completes the proof Theorem \ref{th:main-3}.

The proof of Theorem \ref{th:main-general} now follows using the fact, explained in Section \ref{sec:background}, that if $\Gamma$ is a hyperbolic group and $S$ is any finite generating set of $\Gamma$, there is a thick, geodesic combing of $(\Gamma,S)$.

The proof of Theorem \ref{th:main} then proceeds by replacing Theorem \ref{th:BQ} with \cite[Corollary 1.8]{BQ3}, which applies when $G$ is semisimple and $\rho(\Gamma)$ is Ad-Zariski dense.

\begin{proof}[Proof of Theorem \ref{th:tori}]
The proof follows in a very similar way as the proof of Theorem \ref{th:main-general}, by replacing the use of Theorem \ref{th:BQ}
with the main theorem of \cite{HdS}, which establishes the limit for random walks $(w_n)$, without taking Ces\`aro averages; namely, 
$$\lim_{n \to \infty} f(w_n^{-1} x) = \overline{f}(x),$$
for any irrational $x \in \mathbb{T}^n$.
The only modifications are in Lemma \ref{lem:markov_prim} and Proposition \ref{prop:prim_case}. In Lemma \ref{lem:markov_prim}, we obtain 
$$ f(\gamma^{-1}_{R(n, j)} x) \to \int f \ d\nu_{\overline{\Gamma x}}$$
almost surely, for any $v_j$; hence, since at every step the random walk lies at some vertex $v_j$, this implies 
$$ f(\gamma^{-1}_{n} x) \to \int f \ d\nu_{\overline{\Gamma x}}$$
almost surely. Integrating on both sides yields the analog of Lemma \ref{lem:markov_prim}.
In Proposition \ref{prop:prim_case}, we already know the convergence of $(a_n)$ and $(b_n)$, which immediately implies 
the convergence of $(a_n b_n)$. The rest of the proof follows verbatim, removing the average over $n \leq N$ from 
all equations.
\end{proof}

\section{Equidistribution along random geodesics}
\label{sec:geo_equi}
For $\Gamma$ hyperbolic with finite generating set $S$, we denote by $\mathrm{PS}$ any measure in the Patterson-Sullivan class on the hyperbolic boundary $\partial \Gamma$ associated to the word metric $d_S$. In this context, one can define this class as the class of any  
limit point of the sequence of spherical averages $\nu_n := \frac{1}{\#S_n} \sum_{|g| = n} \delta_g$ in the space of measures on $\Gamma \cup \partial \Gamma$. 
See \cite{Coornaert,CF,GTT1} for definitions and details.
For a geodesic ray $\gamma = [1, \eta)$ we write $\gamma(n)$ to be element of $\Gamma$ such that $d_S(1,\gamma(n)) = n$.

\begin{theorem} \label{th:random_geos}
With notation and hypotheses as in Theorem \ref{th:main-general}, if $\Gamma  x$ is infinite and has connected closure, then for $\mathrm{PS}$--almost every $\eta \in \partial \Gamma$, there is a geodesic $\gamma = [1,\eta)$ such that 
\begin{align} \label{eq:geo_equi}
\frac{1}{N}\sum_{n=1}^N \delta_{\gamma(n)^{-1}x} \longrightarrow \nu_{\overline{\Gamma x}}.
\end{align}
\end{theorem}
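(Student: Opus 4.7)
The plan is to exploit the Markov chain structure on a thick geodesic combing $(D,v_0,\ev)$ for $(\Gamma,S)$ coming from Cannon's construction. Sample paths of this chain are infinite geodesic rays in $\Gamma$ from the identity; the proof of Lemma \ref{lem:markov_prim} already shows that empirical measures on $X$ equidistribute along $\mathbb{P}$-almost every sample path, and the push-forward of $\mathbb{P}$ to $\partial\Gamma$ via the endpoint map lies in the Patterson-Sullivan class.

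First I would pass to the $p$-step refinement $D_p$ as in Section \ref{sec:general} to ensure primitivity of the restriction to each maximal component, and then construct a Markov probability measure $\mathbb{P}$ on the space $\Omega_{v_0}^\infty$ of infinite paths starting at $v_0$: transitions are those of the standard Markov chain determined by the right eigenvector $(p_i)$ and leading eigenvalue $\lambda$ of the transition matrix, so that almost every sample path eventually enters a maximal component $\mc V$ and then evolves as the stationary Markov chain on $\mc V$ with stationary distribution $\pi$. Since each $\gamma\in\Omega_{v_0}^\infty$ is a geodesic ray in the Cayley graph of a hyperbolic group, it has a well-defined endpoint in $\partial\Gamma$.

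For almost-sure equidistribution along sample paths, I would reuse the derivation inside the proof of Lemma \ref{lem:markov_prim}: for any vertex $v_j$ in a maximal component, Lemma \ref{lem:usethis} together with Lemma \ref{L:conv-RW} (applied to the loop semigroup $\Gamma_j$ and the induced loop measure $\mu_j$) yields eq.~\eqref{E:conv-vertex} $\mathbb{P}$-almost surely. Combining with $T_j(N)/N\to\pi_j$ (\cite[Lemma 4.6]{GTT4}) and summing over all $v_j$ in the same maximal component---using that vertices outside the component contribute only finitely often to the sum along a $\mathbb{P}$-generic sample path---one obtains, for any continuous $f$ with compact support and $\mathbb{P}$-a.e.\ $\gamma$,
\[
\frac{1}{N}\sum_{n=1}^N f(\gamma_n^{-1}x)\longrightarrow \overline{f}(x).
\]

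The final and most delicate step, which I expect to be the main obstacle, is identifying the push-forward of $\mathbb{P}$ under the endpoint map $\Omega_{v_0}^\infty\to\partial\Gamma$ with a measure in the Patterson-Sullivan class. The endpoint map is not injective, so one must directly compare $\mathbb{P}$ to the spherical counting measures $\nu_n=\frac{1}{\#S_n}\sum_{|g|=n}\delta_g$ in the topology of $\Gamma\cup\partial\Gamma$. This comparison is essentially the content of Calegari--Fujiwara \cite{CF} and is also carried out in \cite{GTT1}: the Markov weights are designed so that the projection of $\mathbb{P}$ onto spheres $S_n$ matches $\nu_n$ up to subexponential error, whence the endpoint push-forward is absolutely continuous with respect to any PS measure. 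Transferring the $\mathbb{P}$-almost sure equidistribution statement to $\partial\Gamma$ then gives: for $\mathrm{PS}$-a.e.\ $\eta$ there exists a $\mathbb{P}$-generic lift $\gamma=[1,\eta)$ along which \eqref{eq:geo_equi} holds.
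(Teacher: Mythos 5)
Your overall strategy is the same as the paper's: take the Markov measure $\mathbb{P}_0$ on infinite paths from $v_0$, derive almost-sure equidistribution along sample paths via the return-time decomposition and Lemma~\ref{L:conv-RW}, sum over vertices $v_i$ in the maximal component using $\sum_i T_i(N)/N \to 1$, and transfer to $\mathrm{PS}$--a.e.\ boundary point by identifying the endpoint push-forward of $\mathbb{P}_0$ with a Patterson--Sullivan measure. The last step, which you flag as the main obstacle, is simply cited (from \cite{CF} and \cite[Section 5]{GTT1}) in the paper, together with the fact that any two PS measures are mutually absolutely continuous with bounded Radon--Nikodym derivative; you are right that it is known, and no new argument is needed.

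Where you deviate is the initial pass to $D_p$ ``to ensure primitivity of the restriction to each maximal component.'' This is both unnecessary and mildly problematic. The paper explicitly notes that the derivation of eq.~\eqref{E:almost-every} never uses primitivity: the return-time decomposition, the i.i.d.\ structure of the excursions in $\Gamma_j$, and Lemma~\ref{L:conv-RW} all work for the Markov measure $\mathbb{P}_0$ on the original almost-semisimple combing $D$. If instead you work inside $D_p$, you only obtain control of $\frac{1}{N}\sum_{n\le N} f(\gamma_{pn}^{-1}x)$ along the subsequence of multiples of $p$, whereas the theorem requires the Ces\`aro average over all consecutive $n$. Filling in the residues mod $p$ would require a further argument in the spirit of Section~\ref{sec:general}, which your sketch does not carry out. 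Dropping the $D_p$ detour and invoking the paper's observation gives a cleaner proof with the structure you already laid out.
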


\begin{proof}
Let $\Omega_{0}$ be the set of infinite paths starting at $v_0$ and let $\mathbb{P}_0$ the Markov measure starting at $v_0$, which is supported on $\Omega_{0}$. There is a map $\Omega_{0} \to \partial \Gamma$ sending each infinite path to the endpoint of the associated geodesic ray in Cay$(G;S)$ and this map pushes $\mathbb{P}_0$ forward to a Patterson--Sullivan measure $\mathrm{PS}$ on $\partial \Gamma$ (\cite{CF}, \cite[Section 5]{GTT1}). 
Here we recall that any two Patterson--Sullivan measures are absolutely continuous with bounded  Radon--Nikodym derivative.

Hence, it suffices to prove equation \eqref{eq:geo_equi} for $\mathbb{P}_0$--almost every path $\gamma \in \Omega_{0}$. This was essentially accomplished in Lemma \ref{lem:markov_prim} and we now make the idea explicit.  
Let $v_i$ be a vertex in a maximal component, and let $R(n, i) : \Omega \to \mathbb{N}$ be the $n$th return time to $v_i$, 
and $T_i(N) := \max \{ k \ : \ R(k, i) \leq N \}$. 
Now by equation \eqref{E:almost-every}, noting that its proof does not use that the structure is primitive, we have 

$$\frac{1}{T_i(N)} \sum_{k \leq T_i(N)} f(\gamma_{R(k, i)}^{-1}  x) \to \int  f \ d\nu_{\overline{\Gamma x}} \qquad \mathbb{P}_0\textup{--a.s.}$$
Now since the above is true for any vertex $v_i$ in a maximal component, we have, using also $\sum_i \lim_{N \to \infty}  \frac{T_i(N)}{N} = 1$, that
\begin{align*}
\lim_{N \to \infty} \frac{1}{N} \sum_{n \leq N} f(\gamma_n^{-1}  x) &= \lim_{N \to \infty}  \sum_i \frac{T_i(N)}{N} \cdot \frac{1}{T_i(N)} \sum_{k \leq T_i(N)} f(\gamma_{R(k, i)}^{-1}  x)  \\
&= \int f  \ d\nu_{\overline{\Gamma x}}
\end{align*}
for $\mathbb{P}_0$--almost every $\gamma \in \Omega_{0}$, as desired.
\end{proof}


\begin{thebibliography}{99}

\bibitem{B}
T. B\'enard, Equidistribution of mass for random processes on finite-volume spaces, arXiv:2112.06090.

\bibitem{BdS}
T. B\'enard and N. de Saxc\'e, 
Random walks with bounded first moment on finite-volume spaces, 
Geom. Funct. Anal., to appear. 

\bibitem{BFLM}
J. Bourgain, A. Furman, E. Lindenstrauss, and S. Mozes, 
Stationary measures and equidistribution for orbits of nonabelian semigroups on the torus, 
J. Amer. Math. Soc. 24 (2011), no.1, 231-280.

\bibitem{BQ1}
Y. Benoist and J.-F. Quint,  Mesures stationnaires et ferm\'es invariants des
espaces homog\'enes, Ann. of Math. (2) 174 (2011), 1111-1162.

\bibitem{BQ2}
Y. Benoist and J.-F. Quint, Stationary measures and invariant subsets of
homogeneous spaces (II), J. Amer. Math. Soc. 26 (2013), 659-734.

\bibitem{BQ3}
Y. Benoist and J.-F. Quint, Stationary measures and invariant subsets of
homogeneous spaces (III), Ann. of Math. (2) 178 (2013), 1017-1059.

\bibitem{BQ4}
Y. Benoist, and J.-F. Quint, Introduction to random walks on homogeneous spaces, 
Japanese Journal of Mathematics 7.2 (2012), 135-166.

\bibitem{Bowen}
L. Bowen, Invariant measures on the space of horofunctions of a word hyperbolic group, 
Ergodic Theory Dynam. Systems 30 (2010), no. 1, 97-129.

\bibitem{Buf1}
A. Bufetov, Markov averaging and ergodic theorems for several operators, Topology, ergodic
theory, real algebraic geometry, Amer. Math. Soc. Transl. Ser. 2, 202, Amer. Math. Soc.,
Providence, RI, 2001, pp. 39-50. 

\bibitem{Buf2}
A. Bufetov, Convergence of spherical averages for actions of free groups, 
Ann. of Math. (2) 155 (2002), 929-944. 

\bibitem{BKK}
A. Bufetov, M. Khristoforov and A. Klimenko, Ces\`aro convergence of spherical averages for
measure-preserving actions of Markov semigroups and groups, 
Int. Math. Res. Notices. 21 (2012), 4797-4829.

\bibitem{BN}
L. Bowen and A Nevo, von-Neumann and Birkhoff ergodic theorems for negatively curved groups,
Ann. Sci. \'Ec. Norm. Super. (4) 48 (2015), no. 5, 1113-1147.

\bibitem{BS}
A. Bufetov and C. Series, A pointwise ergodic theorem for Fuchsian groups, Math. Proc.
Camb. Phil. Soc. 151 (2011), 145-159. 

\bibitem{CF}
D. Calegari and K. Fujiwara, Combable functions, quasimorphisms, and the central limit theorem, 
Ergodic Theory Dynam. Systems 30 (2010), 1343-1369. 

\bibitem{Cannon}
J. Cannon, The combinatorial structure of cocompact discrete hyperbolic groups, 
Geom. Dedicata 16 (1984), 123-148. 

\bibitem{Coornaert}
M. Coornaert, Mesures de {P}atterson-{S}ullivan sur le bord d'un espace hyperbolique au sens de {G}romov,
Pacific J. Math. 159, no. 2 (1993), 241-270.

\bibitem{FN}
K. Fujiwara and A. Nevo, Maximal and pointwise ergodic theorems for word-hyperbolic groups,
Ergod. Th. Dynam. Sys. 18 (1998), 843-858. 

\bibitem{GTT1}
I. Gekhtman, S. Taylor and G. Tiozzo, Counting loxodromics for hyperbolic actions, 
J. Topol.  11 (2018), 379-419.

\bibitem{GTT2}
I. Gekhtman, S. Taylor and G. Tiozzo, Counting problems in graph products and relatively hyperbolic groups, 
Israel J. Math. 237 (2020), 311-371.

\bibitem{GTT4}
I. Gekhtman, S. Taylor and G. Tiozzo, Central limit theorems in coarse negative curvature, 
Compos. Math., to appear.

\bibitem{GM}
I.Ya. Gol'dsheid and G.A. Margulis, Lyapunov indices of a product of random matrices, 
Uspekhi Mat. Nauk 44:5 (1989), 13-60 Russian Math. Surveys 44:5 (1989), 11-71.

\bibitem{GGKW}
F. Gu\'eritaud, O. Guichard, F. Kassel, and A. Wienhard,  Anosov representations and proper actions, 
Geom. Topol., 21 (2017), no. 1, 485-584.

\bibitem{GW}
O. Guichard and A. Wienhard, Anosov representations: domains of discontinuity and applications, 
Invent. Math. 190 (2012), no. 2, 357-438.

\bibitem{HdS}
W. He and N. De Saxc\'e, Linear random walks on the torus, 
Duke Math. J. 171 (2022), no. 5, 1061-1133.

\bibitem{HLL}
W. He, T. Lakrec, and E. Lindenstrauss, Equidistribution of affine random walks on some nilmanifolds, 
the collection Analysis at Large, to appear.

\bibitem{NS}
A. Nevo and E. Stein, A generalization of Birkhoff's pointwise ergodic theorem, 
Acta Math. 173 (1994), 135-154.

\bibitem{PS}
M. Pollicott and R. Sharp, 
Ergodic theorems for actions of hyperbolic groups, 
Proc. Amer. Math. Soc. (2013) 141, no. 5, 1749-1757.

\bibitem{Ratner}
M. Ratner, Raghunathan's topological conjecture and distributions of unipotent flows, 
Duke Math. J. 63 (1991), 235-280.

\bibitem{Shah}
N.A. Shah, Closures of totally geodesic immersions in manifolds of constant negative curvature. 
In Group theory from a geometrical viewpoint, 1991.

\bibitem{Yan}
W. Yang, Statistically convex-cocompact actions of groups with contracting elements, 
Int. Math. Res. Notices 2019 (2019), no. 23, 7259-7323.

\bibitem{Zhu}
F. Zhu, Relatively dominated representations, 
Ann. Inst. Fourier (Grenoble) 71 (2021), no. 5, 2169-2235.

\bibitem{ZZ1}
F. Zhu and A. Zimmer, Relatively Anosov representations via flows I: theory, arxiv: 2207.14737.

\bibitem{ZZ2}
F. Zhu and A. Zimmer, Relatively Anosov representations via flows II: examples, arxiv: 2207.14738.


\end{thebibliography}
\end{document}